\title{Topological complexity of $n$ points on a tree}
\author{Steven Scheirer}
\address{Department of Mathematics, Lehigh University\\Bethlehem, PA  18015, United States}
\email{sts413@lehigh.edu}
\urladdr{https://www.lehigh.edu/~sts413}
\newcommand{\Z}{\mathbb{Z}}
\newcommand{\Q}{\mathbb{Q}}
\newtheorem{thm}{Theorem}[section]
\newtheorem{prop}[thm]{Proposition}
\newtheorem{lemma}[thm]{Lemma}
\newtheorem{defn}[thm]{Definition}
\newtheorem{cor}[thm]{Corollary}
\def\qua{\hskip.75em\relax}
\def\co{\colon\thinspace}
\def\VertexOrdering{
\begin{figure}[h]
\labellist
\tiny\hair 2pt
\pinlabel $\ast$ at 181 36
\pinlabel $1$ at 189 95
\pinlabel $2$ at 181 162
\pinlabel $3$ at 150 153
\pinlabel $4$ at 117 173
\pinlabel $5$ at 91 188
\pinlabel $6$ at 223 161
\pinlabel $7$ at 274 190
\pinlabel $8$ at 241 231
\pinlabel $9$ at 223 261
\pinlabel $10$ at 206 290
\pinlabel $11$ at 309 249
\pinlabel $12$ at 332 288
\pinlabel $13$ at 286 348
\pinlabel $14$ at 270 386
\pinlabel $15$ at 330 400
\pinlabel $16$ at 345 321
\pinlabel $17$ at 359 354
\pinlabel $18$ at 369 384
\endlabellist
\centering
\includegraphics[height=4.5cm]{}
\caption{The ordering of the vertices on a tree $\Gamma$}
\label{fig:VertexOrdering}
\end{figure}
}
\def\AllowableOrientations{
\begin{figure}[h]
\labellist
\tiny\hair 2pt
\pinlabel $v$ at 211 227
\pinlabel $v$ at 593 227
\pinlabel $\ast$ at 175 15
\pinlabel $\ast$ at 557 15
\endlabellist
\centering
\includegraphics[height=4.5cm]{}
\caption{Two collections of arcs in a graph $\Gamma$}
\label{fig:AllowableOrientations}
\end{figure}
}
\def\Cells{
\begin{figure}[h]
\labellist
\tiny\hair 2pt
\pinlabel $e$ at 192 133
\pinlabel $3$ at 132 130
\pinlabel $10$ at 550 272
\pinlabel $9$ at 566 243
\pinlabel $f$ at 940 300
\pinlabel $16$ at 1000 300
\endlabellist
\centering
\includegraphics[height=4.5cm]{}
\caption{Three different cells in $UD^2(\Gamma)$}
\label{fig:Cells}
\end{figure}
}
\def\CriticalCells{
\begin{figure}[h]
\labellist
\tiny\hair 2pt
\pinlabel $\ast$ at 125 20
\endlabellist
\centering
\includegraphics[height=4.5cm]{}
\caption{A critical 3-cell $c$ in $UD^8(\Gamma)$}
\label{fig:CriticalCells}
\end{figure}
}
\def\Clouds{
\begin{figure}[h]
\labellist
\tiny\hair 2pt
\pinlabel $1$ at 168 82
\pinlabel $1$ at 105 170
\pinlabel $0$ at 233 160
\pinlabel $2$ at 221 244
\pinlabel $0$ at 291 240
\pinlabel $1$ at 262 330
\pinlabel $0$ at 308 348
\pinlabel $0$ at 333 333
\pinlabel $1$ at 498 82
\pinlabel $1$ at 435 170
\pinlabel $0$ at 563 160
\pinlabel $2$ at 551 244
\pinlabel $2$ at 621 240
\pinlabel $0$ at 828 82
\pinlabel $1$ at 767 170
\pinlabel $0$ at 897 160
\pinlabel $2$ at 885 244
\pinlabel $3$ at 955 240
\endlabellist
\centering
\includegraphics[height=4.5cm]{}
\caption{Cloud diagrams for classes $[c],$ with $c$ as in Figure \ref{fig:CriticalCells} (left), $[d]$ (middle), and $[d']$ (right)}
\label{fig:Clouds}
\end{figure}
}
\def\PartOneA{
\begin{figure}[h]
\labellist
\tiny\hair 2pt
\pinlabel $\ast$ at 140 36
\pinlabel$A_1$ at 170 84 
\pinlabel $A_2$ at 196 201
\pinlabel $A_3$ at 305 240
\pinlabel $v_1$ at 139 127
\pinlabel $v_2$ at 173 147
\pinlabel $v_3$ at 154 209
\pinlabel $v_4$ at 257 175
\pinlabel $v_5$ at 244 253
\pinlabel $v_6$ at 206 287
\pinlabel $v_7$ at 308 280
\pinlabel $v_8$ at 283 159
\pinlabel $v_9$ at 315 129
\pinlabel $v_{10}$ at 356 133
\pinlabel $v_{11}$ at 338 70
\pinlabel $u_1$ at 494 131
\pinlabel $e_1$ at 524 104
\pinlabel $u_2$ at 536 154
\pinlabel $e_2$ at 576 134
\pinlabel $u_3$ at 508 176
\pinlabel $e_3$ at 546 199
\pinlabel $u_4$ at 598 186
\pinlabel $e_4$ at 625 171
\pinlabel $u_5$ at 581 243
\pinlabel $e_5$ at 637 237
\pinlabel $u_6$ at 536 265
\pinlabel $e_6$ at 572 290
\pinlabel $u_7$ at 671 290
\pinlabel $e_7$ at 668 256
\pinlabel $u_8$ at 653 133
\pinlabel $e_8$ at 628 142
\pinlabel $u_9$ at 684 162
\pinlabel $e_9$ at 684 128
\pinlabel $u_{10}$ at 715 161
\pinlabel $e_{10}$ at 724 126
\pinlabel $u_{11}$ at 706 69
\pinlabel $e_{11}$ at 668 78
\pinlabel $\ast=w_1$ at 503 33
\pinlabel $w_2$ at 564 228
\pinlabel $w_3$ at 694 242
\pinlabel $w_4$ at 522 60
\pinlabel $w_5$ at 522 75
\pinlabel $u'_1$ at 860 131
\pinlabel $e'_1$ at 890 104
\pinlabel $u'_2$ at 934 161
\pinlabel $e'_2$ at 917 120
\pinlabel $u'_3$ at 874 176
\pinlabel $e'_3$ at 912 199
\pinlabel $u'_4$ at 964 187
\pinlabel $e'_4$ at 990 174
\pinlabel $u'_5$ at 947 239
\pinlabel $e'_5$ at 1003 237
\pinlabel $u'_6$ at 904 260
\pinlabel $e'_6$ at 938 290
\pinlabel $u'_7$ at 1037 290
\pinlabel $e'_7$ at 1034 250
\pinlabel $u'_8$ at 1019 131
\pinlabel $e'_8$ at 992 142
\pinlabel $u'_9$ at 1050 156
\pinlabel $e'_9$ at 1050 124
\pinlabel $u'_{10}$ at 1081 165
\pinlabel $e'_{10}$ at 1090 126
\pinlabel $u'_{11}$ at 1072 96
\pinlabel $e'_{11}$ at 1030 80
\pinlabel $w'_1$ at 1010 48
\pinlabel $w'_2$ at 878 265
\pinlabel $w'_3$ at 1124 160
\pinlabel $\ast=w'_4$ at 869 33
\pinlabel $w'_5$ at 888 60
\endlabellist
\centering
\includegraphics[height=4cm]{}
\caption{A graph $\Gamma$ (left) and the critical cells $\Phi$ and $\Psi$ in $UD^{27}(\Gamma)$ (center and right)}
\label{fig:PartOneA}
\end{figure}
}
\def\PartOneB{
\begin{figure}[h]
\labellist
\scriptsize\hair 2pt
\pinlabel $10$ at 252 234
\pinlabel $8$ at 325 307
\pinlabel $8$ at 367 163
\pinlabel $8$ at 815 234
\pinlabel $8$ at 892 307
\pinlabel $10$ at 936 163
\endlabellist
\centering
\includegraphics[height=4cm]{}
\caption{Cloud representations for $[c_4]$ (left) and $[d_4]$ (right) corresponding to the cells $\Phi$ and $\Psi$ in $UD^{27}(\Gamma)$}
\label{fig:PartOneB}
\end{figure}
}
\def\PartTwoA{
\begin{figure}[h!]
\labellist
\tiny\hair 2pt
\pinlabel $u_1$ at 190 217
\pinlabel $e_1$ at 260 191
\pinlabel $u_2$ at 416 228
\pinlabel $e_2$ at 422 181
\pinlabel $\widetilde{u}_1$ at 109 141
\pinlabel $\widetilde{e}_1$ at 180 142
\pinlabel $\widetilde{u}_2$ at 155 300
\pinlabel $\widetilde{e}_2$ at 212 295
\pinlabel $\widetilde{u}_3$ at 285 268
\pinlabel $\widetilde{e}_3$ at 315 216
\pinlabel $u'_1$ at 804 232
\pinlabel $e'_1$ at 787 162
\pinlabel $u'_2$ at 984 222
\pinlabel $e'_2$ at 944 172
\pinlabel $\widetilde{u}_4$ at 829 352
\pinlabel $\widetilde{e}_4$ at 906 336
\pinlabel $\widetilde{u}_5$ at 770 384
\pinlabel $\widetilde{e}_5$ at 809 431
\pinlabel $\widetilde{u}_6$ at 921 426
\pinlabel $\widetilde{e}_6$ at 958 376
\endlabellist
\centering
\includegraphics[height=3.7cm]{}
\caption{The critical cells $\Phi$ and $\Psi$ in $UD^{10}(\Gamma)$}
\label{fig:PartTwoA}
\end{figure}
}
\def\PartTwoBi{
\begin{figure}[h]
\labellist
\tiny\hair 2pt
\pinlabel $A_1$ at 154 86
\pinlabel $A_2$ at 176 177
\pinlabel $A_3$ at 276 227
\pinlabel $V_1$ at 109 90
\pinlabel $V_2$ at 137 201
\pinlabel $V_3$ at 288 271
\pinlabel $V_1'$ at 316 55
\pinlabel $V_2'$ at 191 275
\pinlabel $V_3'$ at 365 135
\pinlabel $\widetilde{V}_1$ at 249 169
\pinlabel $\widetilde{V}_2$ at 228 239
\pinlabel $\widetilde{V}_3$ at 269 159
\pinlabel $\widehat{V}_1$ at 155 137
\pinlabel $\widehat{V}_2$ at 302 116
\pinlabel $U_1$ at 478 115
\pinlabel $E_1$ at 510 93
\pinlabel $U_2$ at 494 168
\pinlabel $E_2$ at 530 188
\pinlabel $U_3$ at 652 280
\pinlabel $E_3$ at 650 240
\pinlabel $\widetilde{U}_1$ at 579 177
\pinlabel $\widetilde{E}_1$ at 611 163
\pinlabel $\widetilde{U}_2$ at 564 230
\pinlabel $\widetilde{E}_2$ at 616 217
\pinlabel $\widetilde{U}_3$ at 639 118
\pinlabel $\widetilde{E}_3$ at 608 130
\pinlabel $\widehat{U}_1$ at 518 147
\pinlabel $\widehat{E}_1$ at 551 151
\pinlabel $\widehat{U}_2$ at 667 152
\pinlabel $\widehat{E}_2$ at 670 115
\pinlabel $U'_1$ at 1054 82
\pinlabel $E'_1$ at 1034 51
\pinlabel $U'_2$ at 888 250
\pinlabel $E'_2$ at 916 280
\pinlabel $U'_3$ at 1093 145
\pinlabel $E'_3$ at 1063 118
\pinlabel $\widetilde{U}_1$ at 942 177
\pinlabel $\widetilde{E}_1$ at 974 163
\pinlabel $\widetilde{U}_2$ at 930 230
\pinlabel $\widetilde{E}_2$ at 976 217
\pinlabel $\widetilde{U}_3$ at 1002 146
\pinlabel $\widetilde{E}_3$ at 968 130
\pinlabel $\widehat{U}_1'$ at 914 154
\pinlabel $\widehat{E}_1'$ at 898 108
\pinlabel $\widehat{U}_2'$ at 1028 145
\pinlabel $\widehat{E}_2'$ at 1005 106
\endlabellist
\centering
\includegraphics[height=4cm]{}
\caption{The critical cells $\Phi$ and $\Psi$ in $UD^{16}(\Gamma)$ (middle and right)}
\label{fig:PartTwoBi}
\end{figure}
}
\def\PartTwoBii{
\begin{figure}[h]
\labellist
\tiny\hair 2pt
\pinlabel $A_1$ at 154 82
\pinlabel $A_2$ at 176 177
\pinlabel $A_0$ at 291 227
\pinlabel $V_1$ at 111 89
\pinlabel $V_2$ at 142 190
\pinlabel $V'_1$ at 313 50
\pinlabel $V'_2$ at 190 268
\pinlabel $\widetilde{V}_1$ at 252 168
\pinlabel $\widetilde{V}_2$ at 233 232
\pinlabel $\widetilde{V}_3$ at 288 266
\pinlabel $\widetilde{V}_4$ at 273 152
\pinlabel $\widetilde{V}_5$ at 361 129
\pinlabel $\widehat{V}_1$ at 163 130
\pinlabel $\widehat{V}_2$ at 302 111
\pinlabel $U_1$ at 479 112
\pinlabel $E_1$ at 507 87
\pinlabel $U_2$ at 499 162
\pinlabel $E_2$ at 532 187
\pinlabel $\widetilde{U}_1$ at 580 170
\pinlabel $\widetilde{E}_1$ at 612 158
\pinlabel $\widetilde{U}_2$ at 583 240
\pinlabel $\widetilde{E}_2$ at 616 214
\pinlabel $\widetilde{U}_3$ at 652 274
\pinlabel $\widetilde{E}_3$ at 652 236
\pinlabel $\widetilde{U}_4$ at 638 114
\pinlabel $\widetilde{E}_4$ at 611 130
\pinlabel $\widetilde{U}_5$ at 700 147
\pinlabel $\widetilde{E}_5$ at 727 121
\pinlabel $\widehat{U}_1$ at 519 138
\pinlabel $\widehat{E}_1$ at 561 118
\pinlabel $\widehat{U}_2$ at 666 144
\pinlabel $\widehat{E}_2$ at 673 112
\pinlabel $x$ at 680 250
\pinlabel $U'_1$ at 1054 80
\pinlabel $E'_1$ at 1037 52
\pinlabel $U'_2$ at 882 244
\pinlabel $E'_2$ at 916 275
\pinlabel $\widetilde{U}_1$ at 942 170
\pinlabel $\widetilde{E}_1$ at 974 158
\pinlabel $\widetilde{U}_2$ at 945 240
\pinlabel $\widetilde{E}_2$ at 976 212
\pinlabel $\widetilde{U}_3$ at 1014 274
\pinlabel $\widetilde{E}_3$ at 1014 236
\pinlabel $\widetilde{U}_4$ at 1000 145
\pinlabel $\widetilde{E}_4$ at 972 130
\pinlabel $\widetilde{U}_5$ at 1064 149
\pinlabel $\widetilde{E}_5$ at 1089 121
\pinlabel $\widehat{U}'_1$ at 918 147
\pinlabel $\widehat{E}'_1$ at 899 104
\pinlabel $\widehat{U}'_2$ at 1030 139
\pinlabel $\widehat{E}'_2$ at 1004 105
\pinlabel $x'$ at 1095 153
\endlabellist
\centering
\includegraphics[height=4cm]{}
\caption{The critical cells $\Phi$ and $\Psi$ in $UD^{19}(\Gamma)$ (middle and right)}
\label{fig:PartTwoBii}
\end{figure}
}
\def\RCv{
\begin{figure}[h]
\labellist
\tiny\hair 2pt
\pinlabel $e$ at 182 321
\pinlabel $0$ at 124 284
\pinlabel $1$ at 158 382
\pinlabel $C_\tau$ at 220 400
\pinlabel $3$ at 280 352
\pinlabel $1$ at 258 294
\pinlabel $C_\iota$ at 209 248
\pinlabel $0$ at 410 274
\pinlabel $2$ at 313 153
\pinlabel $0$ at 557 284
\pinlabel $2$ at 600 382
\pinlabel $C_\tau\cup\{\tau(e)\}$ at 653 410
\pinlabel $3$ at 717 352
\pinlabel $1$ at 691 294
\pinlabel $C_\iota\cup\{\iota(e)\}$ at 642 248
\pinlabel $0$ at 843 278
\pinlabel $2$ at 746 153
\endlabellist
\centering
\includegraphics[height=5cm]{}
\caption{The cloud diagram $\mathcal{C}$ for a standard cocycle $\phi_{[c]}$ (left) and the cloud diagram for the cochain $\mathcal{R}_{\mathcal{C},v}$ (right)}
\label{fig:RCv}
\end{figure}
}
\def\ThetaTau{
\begin{figure}[h]
\labellist
\tiny\hair 2pt
\pinlabel $0$ at 104 280
\pinlabel $1$ at 148 332
\pinlabel $3$ at 194 300
\pinlabel $1$ at 196 256
\pinlabel $0$ at 340 266
\pinlabel $2$ at 244 140
\pinlabel $0$ at 439 280
\pinlabel $2$ at 483 324
\pinlabel $2$ at 540 306
\pinlabel $1$ at 531 256
\pinlabel $0$ at 675 266
\pinlabel $2$ at 579 140
\endlabellist
\centering
\includegraphics[height=4cm]{}
\caption{The sum $\sum_{i=1}^{d(v)-1}\Theta_{\mathcal{C},v,\tau,i}$ with $\mathcal{C}$ as in Figure \ref{fig:RCv}}
\label{fig:ThetaTau}
\end{figure}
}
\def\ThetaIota{
\begin{figure}[h]
\labellist
\tiny\hair 2pt
\pinlabel $0$ at 100 284
\pinlabel $2$ at 148 332
\pinlabel $3$ at 194 300
\pinlabel $0$ at 196 256
\pinlabel $0$ at 340 266
\pinlabel $2$ at 244 140
\pinlabel $0$ at 449 280
\pinlabel $2$ at 493 324
\pinlabel $3$ at 550 306
\pinlabel $0$ at 541 256
\pinlabel $0$ at 685 266
\pinlabel $2$ at 589 140
\pinlabel $0$ at 802 280
\pinlabel $2$ at 843 324
\pinlabel $3$ at 900 306
\pinlabel $0$ at 896 256
\pinlabel $0$ at 1035 266
\pinlabel $2$ at 941 140
\endlabellist
\centering
\includegraphics[height=4cm]{}
\caption{The sum $\sum_{i=1}^{d(v)-1}\Theta_{\mathcal{C},v,\iota,i}$ with $\mathcal{C}$ as in Figure \ref{fig:RCv}}
\label{fig:ThetaIota}
\end{figure}
}
\begin{document}

\begin{abstract} 
The topological complexity of a path-connected space $X,$ denoted by $TC(X),$ can be thought of as the minimum number of continuous rules needed to describe how to move from one point in $X$ to another.  The space $X$ is often interpreted as a configuration space in some real-life context.  Here, we consider the case where $X$ is the space of configurations of $n$ points on a tree $\Gamma.$  We will be interested in two such configuration spaces.  In the, first, denoted by $C^n(\Gamma),$ the points are distinguishable, while in the second, $UC^n(\Gamma),$ the points are indistinguishable.   We determine $TC(UC^n(\Gamma))$ for any tree $\Gamma$ and many values of $n,$ and consequently determine $TC(C^n(\Gamma))$ for the same values of $n$ (provided the configuration spaces are path-connected).
\end{abstract}

\maketitle

\section{Introduction}\label{sec:Intro}
For any topological space $X,$ let $P(X)$ be the space of continuous paths $\sigma\co[0,1]\to X$ equipped with the compact-open topology.  There is a fibration $p\co P(X)\to X\times X$ which sends a path $\sigma$ to its endpoints: $p(\sigma)=(\sigma(0),\sigma(1)).$  When studying the problem of motion planning within a topological space, one often wishes to find sections of this fibration.  That is, one wishes to find functions $s\co X\times X\to P(X)$ such that $p\circ s$ is the identity.  Such a function takes a pair of points as input and produces a path between those points, hence the relation to motion planning.  The continuity of a section $s$ at a point $(x,y)\in X\times X$ means that if $(x',y')$ is ``close" to $(x,y),$ then the path $s(x',y')$ is ``close" to the path $s(x,y).$  Unfortunately it is a rarity that such a function can be continuous over all of $X\times X$ (in fact, such a continuous section exists if and only if the space $X$ is contractible, see \cite{FarberTC}).  This leads to the definition of topological complexity introduced by Farber in \cite{FarberTC}: 

\begin{defn}\label{def:TC}
For any path-connected space $X,$ the topological complexity of $X$, denoted by $TC(X),$ is the smallest integer $k\ge1$ such that there is a cover of $X\times X$ by open sets $U_1,U_2,\dots,U_k$ and continuous sections $s_i\co U_i\to P(X).$  If there is no such $k,$ set $TC(X)=\infty.$
\end{defn}
Such a collection of sets $U_i$ and sections $s_i$ is called a \emph{motion planning algorithm.}  Thus, $TC(X)$ is in some sense the smallest number of continuous rules required to describe how to move between any two points in $X.$   The space $X$ is often viewed as the space of configurations of some real-world system.  One example is when $X$ is the space of configurations of $n$ robots which move around a factory along a system of one-dimensional tracks.  Such a system of tracks can be interpreted as a graph $\Gamma$ (a one-dimensional CW complex).  There are two types of these configuration spaces:  in the first, denoted by $C^n(\Gamma),$ the robots are distinguishable, and in the second, denoted by $UC^n(\Gamma),$ the robots are indistinguishable.  In other words, in $C^n(\Gamma),$ both the points of $\Gamma$ occupied by robots, and the specific robots which occupy those points are of interest, while in $UC^n(\Gamma),$ it is only required that each specified point in $\Gamma$ is occupied by \emph{some} robot, but it is irrelevant which specific robot occupies each point.  There are different real-world situations in which one configuration space is preferable over the other, depending on whether or not the robots are to perform different tasks.  Our main goal is to study the topological complexity of the configuration spaces $C^n(\Gamma)$ and $UC^n(\Gamma)$ when $\Gamma$ is a tree (a tree is a connected graph which has no cycles).  The topological complexity of these configuration spaces is related to the number of vertices of degree greater than 2 (the degree of a vertex is defined in Section \ref{sec:graphconfig}).  These vertices are called \emph{essential vertices}, and $m(\Gamma)$ is the number of essential vertices in $\Gamma.$  Here, an \emph{arc} in $\Gamma$ is a subspace homeomorphic to a non-trivial closed interval.  We will be interested in certain collections of arcs in $\Gamma$ which are called \emph{allowable} and will be defined in Definition \ref{defn:AllowableOrientations}.  In Section \ref{sec:treeMP}, we establish the following:

\begin{thm}\label{thm:Main} Let $\Gamma$ be a tree with $m:=m(\Gamma)\ge 1.$
\begin{enumerate}
\item\label{nLarge} Let $k$ be the smallest integer such that there is a collection of oriented arcs $\{A_i\}_{i=1}^k$ which is allowable for the collection of all vertices of degree 3 in $\Gamma.$ If there are no vertices of degree 3, let $k=0.$  Let $n\ge 2m+k$ be an integer.  Then, $TC(C^n(\Gamma))=TC(UC^n(\Gamma))=2m+1.$
\item\label{nNotLarge} Let $n=2q+\epsilon<2m,$ with $\epsilon\in\{0,1\}$ and $q\ge1,$ let $r$ be the number of vertices of degree greater than 3, and let $s$ be the number of vertices of degree 3. Suppose one of the these three cases hold:
\begin{enumerate}
\item\label{nSmall} $s\ge 2(q-r).$ 
\item \label{nMedium}\begin{enumerate}
\item\label{even}$s<2(q-r),\ \epsilon=0,$ and there is some $k\ge1$ such that there exists a collection of oriented arcs $\{A_i\}_{i=1}^k$ with the following properties:
\begin{enumerate}
\item The endpoints of each $A_l$ are (distinct) essential vertices, neither of which is an endpoint of any other $A_{l'},$
\item  There are $r'\le r$ vertices of degree greater than 3 which are not the endpoints of any $A_l,$
\item  There is a collection $\mathcal{V}$ of degree-3 vertices, with $|\mathcal{V}|\ge q-r'-k$ such that $\{A_i\}_{i=1}^k$ is allowable for $\mathcal{V}.$  
\end{enumerate}
\item\label{odd} $s<2(q-r),\ \epsilon=1,$ and there is an arc $A_0$ whose endpoints have no restrictions and whose interior includes a collection $\mathcal{W}'$ of $s'\le q$ distinct vertices of degree 3, and if $s'<q-r,$ there are arcs $A_1,\dots,A_k,$ as above whose endpoints are also not vertices in $\mathcal{W}',$ and there is another collection of degree-3 vertices, $\mathcal{W}$, such that $\mathcal{W}\cap\mathcal{W}'=\emptyset,\ |\mathcal{W}|\ge q-r'-k-s'$ and $\{A_i\}_{i=1}^k$ is allowable for $\mathcal{W},$ where $r'$ is as above.
\end{enumerate} 
\end{enumerate}
Then $TC(C^n(\Gamma))=TC(UC^n(\Gamma))=2q+1.$ 
\end{enumerate}
\end{thm}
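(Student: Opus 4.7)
The strategy is to establish matching upper and lower bounds, proving each side by a somewhat different technique. Throughout, let $X$ denote either $C^n(\Gamma)$ or $UC^n(\Gamma)$; since the quotient $C^n(\Gamma)\to UC^n(\Gamma)$ is a regular covering by $S_n$, general TC inequalities together with matching bounds on both spaces will force them equal to the common value $2m+1$ (resp.\ $2q+1$).

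\textbf{Upper bound (explicit motion planners).} The heart of the proof is a construction of a motion planning algorithm with the prescribed number of sections, and this is where the combinatorial hypotheses enter. The idea is to use the allowable arcs as \emph{passing lanes}: near a vertex of degree $\ge 3$, a point can temporarily step off into a branch to let another point go by without collision. In case \ref{nLarge}, because $n\ge 2m+k$, we can reserve $2m$ points to occupy neighborhoods of the $m$ essential vertices (two per vertex, one on each ``side'' of the relevant branch) while the remaining $k$ points traverse the chosen allowable arcs; sorting pairs $(x,y)\in X\times X$ by the local combinatorics at each essential vertex partitions $X\times X$ into $2m+1$ open regions on which a continuous section can be written down by prescribing, for each essential vertex, which branch a nearby point moves into. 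In case \ref{nNotLarge}, where $n<2m$, the conditions on $s$, $r$, $r'$, $k$ (and in the odd case the additional arc $A_0$) describe exactly the data needed to perform the analogous construction using only $q$ (roughly) passing events at a time, yielding $2q+1$ sections.

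\textbf{Lower bound (cohomology).} I would derive the lower bounds from the zero-divisor cup length of $H^*(X\times X)$. Using the Farley–Sabalka discrete Morse model for $UC^n(\Gamma)$, one identifies $1$-dimensional cohomology classes dual to the essential vertices (or, in case~\ref{nNotLarge}, to the distinguished degree-$3$ vertices determined by $q$, $r$, $\mathcal{V}$ or $\mathcal{W}$). Pulling these back to $X\times X$ by the two projections produces zero-divisors $\bar u_i$. A careful count of nonvanishing cup products, matching the upper-bound construction vertex-by-vertex, exhibits a nonzero product of $2m$ (resp.\ $2q$) such zero-divisors, giving $TC(X)\ge 2m+1$ (resp.\ $2q+1$). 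The lower bound for $C^n(\Gamma)$ follows either by pulling the same classes back along the cover or by running the same argument directly.

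\textbf{Main obstacle.} The routine part is the lower bound once the right cohomology classes are identified; the combinatorial setup of Theorem~\ref{thm:Main} already tells one where they must live. The genuinely delicate step is the motion planner in case~\ref{nNotLarge}(b), in particular the odd subcase \ref{odd} where the extra arc $A_0$ plays a distinguished role: I expect most of the work to go into verifying that the conditions on $\mathcal{W}'$, $\mathcal{W}$, and the arcs $A_1,\dots,A_k$ are precisely enough to guarantee that each open region in the proposed cover of $X\times X$ admits a globally continuous section, with no collisions introduced at the boundaries between regions. Getting the bookkeeping right — especially ensuring endpoints of distinct $A_l$ remain disjoint from the ``parked'' vertex sets — is where the bulk of the proof will reside.
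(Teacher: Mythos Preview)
Your proposal reverses the roles of the upper and lower bounds, and this is a genuine gap. In the paper, the upper bound $TC\le 2m+1$ (resp.\ $2q+1$) is \emph{not} where the allowable--arc hypotheses enter. It follows immediately from the fact that $UD^n(\Gamma)$ deformation retracts onto a complex of dimension $\min\{\lfloor n/2\rfloor,m\}$ (Theorem~\ref{thm:kcells}), together with the general bound $TC(X)\le 2\dim X+1$; an explicit motion planner of that size is written down in Lemma~\ref{lemma:MotionPlanning} without any reference to the arcs $A_i$. So your ``passing lanes'' construction is aimed at the wrong inequality, and in any case does not explain why the number of regions should be exactly $2m+1$ independently of $k$.

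The arc conditions are needed for the \emph{lower} bound, and this is the nontrivial part you are underestimating. Via Lemma~\ref{lemma:products}, one must exhibit two critical top-dimensional cells $\Phi,\Psi$ whose associated collections of $1$-cell equivalence classes $\{[c_i]\}$ and $\{[d_j]\}$ are pairwise distinct, so that the product of the corresponding $2m$ (or $2q$) zero-divisors $\overline{\gamma}_i,\overline{\delta}_j$ is nonzero. The obstruction is that at a degree-$3$ vertex the order-disrespecting edge is forced (direction~$2$), so $\Phi$ and $\Psi$ must share that edge; to get $[c_i]\ne[d_j]$ one must arrange that the \emph{cloud values} differ. The allowable arcs are exactly the device that accomplishes this: placing the extra vertices $w_l,w_l'$ at the initial and terminal ends of $A_l$ forces $f(C_\delta)-g(C_\delta)=\eta_\delta(v)$ (or a variant) at each degree-$3$ vertex $v$, and allowability says some $\eta_\delta(v)\ne 0$. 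The cases \ref{nSmall}, \ref{even}, \ref{odd} are precisely the different bookkeeping regimes for building such $\Phi,\Psi$; the odd subcase uses $A_0$ to place the single leftover vertex so that an odd parity argument forces the cloud values apart. Your plan treats the lower bound as routine once classes are named, but identifying classes is not the issue --- producing two cells whose $1$-cell classes are all distinct is, and that is what the hypotheses of the theorem encode.
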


In \cite{FarberConfigSpacesMotionPlanning}, Farber proves a similar statement for the spaces $C^n(\Gamma),$ showing that 
\[
TC(C^n(\Gamma))=2m(\Gamma)+1,\quad\text{ for $m(\Gamma)\ge1$ and $n\ge 2m(\Gamma),$}
\]
 with an additional assumption that $\Gamma$ is not homeomorphic to the letter $Y$ if $n=2.$  His proof uses methods which differ from the ones used here, and only address the spaces $C^n(\Gamma)$ for $n$ large.  Theorem \ref{thm:Main} in some sense improves this, since it addresses both the spaces $C^n(\Gamma)$ and $UC^n(\Gamma),$ and includes some values of $n$ less than $2m(\Gamma).$  Furthermore, Corollary \ref{cor:NoDegreeThree} determines the topological complexity of both configuration spaces of a tree with no vertices of degree 3 for all values of $n\ge1,$ provided the configuration spaces are connected.  On the other hand, with $k\ge1$ as above, Farber's results determine $TC(C^n(\Gamma))$ for $n=2m,2m+1,\dots,2m+k-1,$ while our result does not.  We discuss in Proposition \ref{prop:nobetter} the extent to which the results of Theorem \ref{thm:Main} are the best one can achieve with the methods used here.

\section{Configuration spaces of points on graphs}\label{sec:graphconfig}
Consider a graph $\Gamma,$ where, as above, a graph is a 1-dimensional CW complex.  The zero-dimensional cells of $\Gamma$ are the vertices, and the closures of the 1-dimensional cells are the edges.  The degree of a vertex $v$ is the number of edges which have $v$ as exactly one of their endpoints plus twice the number of edges which have $v$ as both endpoints.  We will deal exclusively with finite graphs, so that the number of vertices and edges is finite.  An essential vertex is a vertex of degree equal to or greater than 3, and $m(\Gamma)$ is the number of essential vertices in $\Gamma.$  Let $C^n(\Gamma)$ be the space of $n$-tuples of distinct points in $\Gamma.$  That is, 
\[
C^n(\Gamma)=\overbrace{\Gamma\times\dots\times\Gamma}^{n\ times}-\Delta,
\]
where $\Delta=\{(x_1,\dots,x_n)\in \Gamma\times\cdots\times\Gamma:x_i=x_j \text{ for some }i\ne j\}.$  The space $C^n(\Gamma)$ will be called the \emph{topological configuration space of $n$ ordered points on $\Gamma.$}  Similarly, let 
\[
D^n(\Gamma)=\overbrace{\Gamma\times\dots\times\Gamma}^{n\ times}-\widetilde{\Delta},
\]
where $\widetilde{\Delta}$ consists of all products of cells in $\Gamma\times\cdots\times\Gamma$ whose closures intersect $\Delta.$  Thus, $D^n(\Gamma)$ consists of all products of cells $c_1\times\dots\times c_n$ such that $\overline{c}_i\cap\overline{c}_j=\emptyset$ whenever $i\ne j.$  In what follows, the word ``cell" will always refer to the closure of a cell.  A point in $D^n(\Gamma)$ is then an ordered $n$-tuple of points $(x_1,\dots,x_n)$ in $\Gamma$ such that there is at least a full open edge between two distinct points $x_i$ and $x_j.$  The space $D^n(\Gamma)$ will be called the \emph{discrete configuration space of $n$ ordered points on $\Gamma.$}  

There is a free action of the symmetric group $S_n$ on $C^n(\Gamma)$ and $D^n(\Gamma)$ which permutes the coordinates.  The quotients of these two spaces under this action are denoted by $UC^n(\Gamma)$ and $UD^n(\Gamma),$ and are called the \emph{unordered} topological and discrete configuration spaces.  Given a point $\mathbf{y}=(y_1,\dots,y_n)\in C^n(\Gamma),$ we may find neighborhoods $U_i$ in $\Gamma$ which contain $y_i$ and satisfy $U_i\cap U_j=\emptyset$ for $i\ne j.$  Then, $U=U_1\times\cdots\times U_n$ is a neighborhood of $\mathbf{y}$ in $C^n(\Gamma)$ and for each $\alpha\in S_n,$ we have $\alpha(U)=U_{\alpha(1)}\times\cdots\times U_{\alpha(n)}.$  So, if $\alpha\ne\alpha'\in S_n,$ then there is some $i$ such that $\alpha(i)\ne\alpha'(i),$ and then $U_{\alpha(i)}\cap U_{\alpha'(i)}=\emptyset,$ so $\alpha(U)\cap\alpha'(U)=\emptyset.$  This implies that the quotient map $C^n(\Gamma)\to UC^n(\Gamma)$ is a covering space projection (see \cite[Proposition 1.40]{Hatcher}).  If $\mathbf{y}$ is a point in $D^n(\Gamma),$ by letting $U'=U\cap D^n(\Gamma),$ we see that the same is true of the quotient map $D^n(\Gamma)\to UD^n(\Gamma).$  The topological and discrete spaces are related by the following:

\begin{thm}{\rm \cite{AbramsThesis},\cite{KimKoPark}}\qua \label{thm:defretract}
Let $\Gamma$ be a graph with at least $n$ vertices.  Suppose 
\begin{enumerate}
\item \label{defretractppty1} each path between distinct vertices of degree not equal to 2 in $\Gamma$ contains at least $n-1$ edges, and 
\item each loop at a vertex in $\Gamma$ which is not homotopic to a constant map contains at least $n+1$ edges.
\end{enumerate}
Then, $C^n(\Gamma)$ and $UC^n(\Gamma)$ deformation retract onto $D^n(\Gamma)$ and $UD^n(\Gamma),$ respectively.
\end{thm}

In \cite{AbramsThesis}, Abrams proves a slightly weaker version of Theorem \ref{thm:defretract}, assuming that each path as in item (\ref{defretractppty1}) contains $n+1$ edges, and conjectures the stronger version given here.  Kim, Ko, and Park prove this conjecture in \cite{KimKoPark}.  A graph which satisfies these conditions is called \emph{sufficiently subdivided for $n$.}  Any graph can be made sufficiently subdivided for any $n$ by adding enough degree-2 vertices; this has no effect on the topology of the graph or either of the topological configuration spaces.  The following relates the connectivity of a graph $\Gamma$ and the connectivity of the topological configuration spaces:

\begin{thm}{\rm\cite{AbramsThesis}}\qua \label{thm:connectivity}
Suppose $\Gamma$ is a graph with at least one edge or at least $n+1$ vertices.  Then,
\begin{enumerate}
\item $C^n(\Gamma)$ is path-connected if and only if $\Gamma$ is connected and either
\begin{enumerate}
\item $n=1,$ 
\item $n=2$ and $\Gamma$ is not homeomorphic to a closed interval, or
\item $n\ge 3$ and $\Gamma$ is not homeomorphic to a closed interval or a circle.
\end{enumerate}
\item $UC^n(\Gamma)$ is path-connected if and only if $\Gamma$ is connected.
\end{enumerate}
\end{thm}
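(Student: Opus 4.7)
The plan is to split each biconditional into its necessity and sufficiency directions, dispatching necessity first with locally constant invariants and then establishing sufficiency for $UC^n(\Gamma)$ directly and for $C^n(\Gamma)$ via the covering-space monodromy associated to the covering $C^n(\Gamma)\to UC^n(\Gamma)$ established just above.

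For necessity: when $\Gamma=\Gamma_1\sqcup\cdots\sqcup\Gamma_c$ is disconnected, the vector $(k_1,\ldots,k_c)$ counting how many points of a configuration lie in each $\Gamma_j$ is a locally constant invariant on both $C^n(\Gamma)$ and $UC^n(\Gamma)$; the hypothesis on vertices/edges guarantees that more than one value is realized, so neither space is path-connected. For the extra conditions on $C^n(\Gamma)$: if $\Gamma\cong[0,1]$ and $n\ge2$, the linear ordering of the labeled points along the interval is a locally constant $S_n$-valued invariant yielding $n!\ge2$ components; if $\Gamma\cong S^1$ and $n\ge3$, lifting the motion to the universal cover of the circle shows that the cyclic order of the labels is preserved by any continuous path (since no two labels may coincide), producing at least $(n-1)!\ge2$ components.

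For sufficiency in the unordered case with $\Gamma$ connected, I would fix a spanning tree $T$ of $\Gamma$, pick $n$ distinct ``home'' points $p_1,\ldots,p_n$ on $T$ (using either the vertex-count hypothesis or by spacing points along a single edge), and move an arbitrary unordered configuration to $\{p_1,\ldots,p_n\}$ one point at a time: at each stage the next unplaced point travels along a path in $T$, perturbed locally to sidestep any already-parked bystanders. Since $T$ has no cycles, every such detour is a local adjustment, and iterating produces a continuous path to the canonical configuration, so $UC^n(\Gamma)$ is path-connected.

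For $C^n(\Gamma)$, I would apply covering-space theory: once $UC^n(\Gamma)$ is known to be path-connected, the path components of $C^n(\Gamma)$ are in bijection with the orbits of the monodromy homomorphism $\pi_1(UC^n(\Gamma))\to S_n$ acting on the fiber, so $C^n(\Gamma)$ is connected iff this homomorphism hits a transitive subgroup, and it suffices to realize every adjacent transposition as a monodromy. The main geometric construction is a ``Y-swap'': if $\Gamma$ has an essential vertex $v$ with three incident edges $e_1,e_2,e_3$, park $n-2$ points away from $v$, bring two points to $v$ along $e_1$ and $e_2$, shunt one temporarily onto $e_3$ while the other crosses the junction, and return both to their starting positions with labels exchanged. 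When $\Gamma\cong S^1$ and $n=2$, rotating both points by $\pi$ in the same direction realizes the unique transposition. The main obstacle will be making the parking and swap fully rigorous while confirming that the bystanders can always be accommodated; this reduces to the structural fact that a connected graph in which every vertex has degree at most $2$ must be a point, a closed interval, or a circle, which is precisely what aligns the case split with the statement.
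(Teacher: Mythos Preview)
The paper does not contain a proof of this statement: Theorem~\ref{thm:connectivity} is quoted from \cite{AbramsThesis} and used as a black box, so there is no ``paper's own proof'' to compare against. Your proposal is therefore not in competition with anything in the present paper.

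That said, your sketch is along the standard lines and is essentially correct. The necessity arguments via locally constant invariants (component-count vector, linear order on an interval, cyclic order on a circle) are exactly the right invariants. For sufficiency in the unordered case, the spanning-tree parking argument is fine, though you should be a bit more explicit about why perturbing around already-parked points always succeeds in a tree: the key is that removing finitely many points from a tree leaves the parked points as isolated boundary points of their components, so one can always approach and pass. For the ordered case, the covering-space/monodromy reduction is correct, and the $Y$-swap is the right geometric move; the one place worth a sentence of care is verifying that when $\Gamma$ has an essential vertex you can always bring any two labelled points adjacent to that vertex while keeping the other $n-2$ parked elsewhere, which uses connectedness of $UC^n(\Gamma)$ again. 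The structural dichotomy (connected graph with all degrees $\le 2$ is a point, interval, or circle) is exactly what closes the case analysis, as you note.
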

It follows that if $\Gamma$ is sufficiently subdivided, then in Theorem \ref{thm:connectivity}, $C^n(\Gamma)$ and $UC^n(\Gamma)$ can be replaced with $D^n(\Gamma)$ and $UD^n(\Gamma).$   In fact, Abrams also shows that if $\Gamma$ has at least $n+1$ vertices, then $UD^n(\Gamma)$ is connected if and only if $\Gamma$ is connected, regardless of subdivision.  If $\Gamma$ has exactly $n$ vertices, one can easily find examples in which $\Gamma$ is disconnected, but $UD^n(\Gamma)$ is a single point (so $UD^n(\Gamma)$ is connected). There are less trivial examples in \cite{AbramsThesis} in which $\Gamma$ is connected, while $D^n(\Gamma)$ is disconnected.  

Daniel Farley and Lucas Sabalka have studied extensively the homotopy and homology groups and the cohomology rings of the spaces $UD^n(\Gamma)$ for a sufficiently subdivided tree $\Gamma$ using Forman's discrete Morse theory \cite{Forman}.   Recall a tree is a simply connected graph.  We summarize some of their results which will be relevant here.  From here on, $\Gamma$ is a tree which is sufficiently subdivided for $n$.

First, an ordering on the vertices is constructed as follows.  Embed the tree $\Gamma$ in the plane, and let $\ast$ be a vertex of degree 1. Assign $\ast$ the number 0, travel away from $\ast,$ and number the remaining vertices in order (starting with 1) when they are first encountered.  Whenever an essential vertex is encountered, take the leftmost edge, and turn around when a vertex of degree 1 is encountered.  For each edge $e,$ let $\iota(e)$ and $\tau(e)$ be the two endpoints of $e,$ with $\iota(e)<\tau(e).$   There is also a notion of \emph{directions} from a vertex $v\ne\ast$ of degree $d.$  These directions are a numbering of the edges incident to $v$ from 0 to $d-1,$ in increasing order clockwise around the vertex, with 0 being the direction on the geodesic segment from $v$ to $\ast.$ An example is given in Figure \ref{fig:VertexOrdering}.  The 0-direction at each essential vertex is marked with an arrow;  to avoid clutter, the other directions are not marked on the graph.  Note this graph is only sufficiently subdivided for $n=2,$ since there is only one edge along the geodesic from vertex 12 to vertex 15.

\VertexOrdering

Recall an arc in $\Gamma$ is a subspace homeomorphic to a non-trivial closed interval.  Farley and Sabalka's notion of directions enables us to define the notion of an allowable collection of oriented arcs.  Given a finite collection of oriented arcs $\{A_i\}_{i=1}^k$ in $\Gamma,$ and a vertex $v$ in $\Gamma$ of degree $d,$ we will  define integers $\eta_0(v),\eta_1(v),\dots,\eta_{d-1}(v)$ as follows.  First, for $i=1,2,\dots,k$ and $j=0,1,\dots,d-1,$ if $v$ falls on the arc $A_i$ and $A_i$ intersects the interior of the edge $e_j$ incident to $v$ in direction $j,$ let $\eta_{j,i}(v)=1$ if $A_i$ is oriented towards $v$ on $e_j$ and $\eta_{j,i}(v)=-1$ if $A_i$ is oriented away from $v$ on $e_j.$  If $v$ does not fall on $A_i$ or if $A_i$ does not intersect the interior of $e_j,$ let $\eta_{j,i}(v)=0.$  Then, let 
\[
\eta_j(v)=\sum_{i=1}^k\eta_{j,i}(v).
\]
This leads to the following definition:
\begin{defn}\label{defn:AllowableOrientations}
Suppose $\mathcal{A}=\{A_i\}_{i=1}^k$ is a collection of oriented arcs in $\Gamma$ and $\mathcal{V}$ is a collection of vertices of $\Gamma.$  The collection $\mathcal{A}$ is said to be allowable for $\mathcal{V}$ if every vertex $v\in\mathcal{V}$ of degree $d$ has the property that $v$ is not an endpoint of any $A_i,$ and at least one of $\eta_0(v),\ \eta_1(v),\dots,\eta_{d-1}(v)$ is non-zero.
\end{defn}
Intuitively, a collection of oriented arcs is allowable for a given collection $\mathcal{V}$ of vertices if at each vertex in $\mathcal{V},$ there is at least one direction in which the orientations of the arcs don't ``cancel out," and no vertex in $\mathcal{V}$ is an endpoint of any arc.  Also, if $\mathcal{V}=\emptyset,$ then any collection of arcs is allowable for $\mathcal{V}.$  We will be most interested in the case in which $\mathcal{V}$ is a subset of the vertices of degree 3.  Figure \ref{fig:AllowableOrientations} shows an example of two collections of oriented arcs in a graph $\Gamma.$  The arcs are shown with a dashed line and orientations indicated with arrows, and have been moved away from $\Gamma$ so that they are distinguishable in the figure.  If $\mathcal{V}$ is the collection of all essential vertices of $\Gamma$ (which are all of degree 3), and $\mathcal{W}$ is the collection of all essential vertices of $\Gamma$ except the vertex labeled $v,$ then both collections of arcs are allowable for $\mathcal{W},$ but the collection of arcs on the left is not allowable for $\mathcal{V},$ while the collection on the right is.

\AllowableOrientations

Note that given a collection of oriented arcs $\mathcal{A}=\{A_i\}_{i=1}^k$ which is allowable for $\mathcal{V}$ and a vertex $v\in\mathcal{V}$ of degree $d$, if $j$ is a direction satisfying $\eta_j(v)\ne0,$ then there must be some other direction $j'\ne j$ with $\eta_{j'}(v)\ne0.$  Indeed if $v$ falls on an arc $A_i,$ then $v$ must fall on the interior of $A_i,$ so that $\eta_{j_1,i}(v)=1$ for some $j_1,$ and $\eta_{j_2,i}(v)=-1$ for some $j_2\ne j_1.$  Since $\Gamma$ is a tree, for all other directions $j,$ we have $\eta_{j,i}(v)=0,$ so $A_i$ contributes 0 to the sum 
\[
\sum_{j=0}^{d-1}\sum_{i=1}^k\eta_{j,i}(v)=\sum_{j=0}^{d-1}\eta_j(v).
\]
On the other hand, if $v$ does not fall on $A_i,$ then $\eta_{j,i}(v)=0$ for all $j,$ so again, $A_i$ contributes 0 to the sum above.  So, we have $\eta_0(v)+\eta_1(v)+\cdots+\eta_{d-1}(v)=0,$ showing that there cannot be exactly one value $j$ for which $\eta_j(v)\ne0.$  Furthermore, for the same reason, if $\eta_j(v)\ne0,$ then there must be some direction $j'\ne j$ such that $\eta_j(v)$ and $\eta_{j'}(v)$ have opposite signs.  

A cell $c$ of $UD^n(\Gamma)$ can be described as a collection of vertices and edges, $c=\{c_1,\dots,c_n\},$ where each $c_i$ is a vertex or an edge, and $c_i\cap c_j=\emptyset$ for $i\ne j.$ Note the order in which the vertices and edges appear in $c$ does not matter.  The dimension of the cell $c$ is the number of edges in $c.$  Consider a cell $c=\{v,c_2,\dots,c_n\}$ containing some vertex $v\ne\ast.$  If $e_v$ is the unique edge in $\Gamma$ which has $\tau(e_v)=v,$ and $\{e_v,c_2,\dots,c_n\}$ is a valid cell in $UD^n(\Gamma),$ then $v$ is said to be \emph{unblocked} in $c.$  Otherwise, $v$ is \emph{blocked} in $c.$  The vertex $\ast$ is also said to be blocked in any cell containing it.  In other words, $v$ is unblocked in $c$ if and only if $v\ne \ast$ and $v$ can be replaced with the edge which contains $v$ and is on the geodesic segment from $v$ to $\ast.$  

Now, consider a cell $c=\{e,c_2,\dots,c_n\}$ which contains some edge $e.$  If there is some vertex $v$ in $c$ which has the property that $\iota(e_v)=\iota(e)$ and $\iota(e)<v<\tau(e),$ then $e$ is said to be \emph{order-disrespecting} in $c.$  Otherwise, the edge $e$ is \emph{order-respecting} in $c.$  Figure \ref{fig:Cells} gives examples of 3 different cells in $UD^2(\Gamma)$ with $\Gamma$ as in Figure \ref{fig:VertexOrdering}.  The vertices and edges which are to be included in a cell $c$ are labeled; all unlabeled vertices and edges are not included in the cell.   In the left cell, the vertex 3 is blocked, since $e_3$ intersects the edge labeled $e.$  Also, the edge $e$ is order-disrespecting since $\iota(e_3)=2=\iota(e)$ and $\iota(e)=2<3<\tau(e)=6.$  In the middle cell, the vertex 10 is blocked, but 9 is unblocked.  In the cell on the right, the vertex 16 is blocked, but $f$ is order-respecting, since although $\iota(e_{16})=\iota(f),$ we have $\tau(f)=13<16.$  

\Cells

Farley and Sabalka construct a discrete vector field $W$ and prove the following classification of the critical, collapsible, and redundant cells in $UD^n(\Gamma)$ with respect to $W.$  The terms \emph{critical, collapsible}, and \emph{redundant} come from discrete Morse theory, but their definitions will not be needed here.
\begin{thm}{\rm\cite{FarleySabalka}}\qua \label{thm:critcells}
A cell $c$ of $ UD^n(\Gamma)$ is critical if and only if each vertex in $c$ is blocked and each edge in $c$ is order-disrespecting.  A cell $c$ is collapsible if and only if it contains some order-respecting edge $e$ with the property that any unblocked vertex $v$ in $c$ satisfies $v>\tau(e).$  All other cells are redundant.  
\end{thm}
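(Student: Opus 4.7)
The plan is to prove this classification via Forman's discrete Morse theory by exhibiting an explicit discrete vector field $W$ on $UD^n(\Gamma)$ whose matching realizes the three cases of the theorem. Recall that such a $W$ is an acyclic partial matching on the face poset in which matched cells differ by exactly one dimension; following the Farley--Sabalka convention, I call a cell collapsible if its match is a face of one lower dimension, redundant if its match is a coface of one higher dimension, and critical if it is unmatched.

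The matching is generated by two elementary swap moves inside a cell $c$: an unblocked vertex $v\in c$ may be replaced by the edge $e_v$, raising the dimension of $c$ by one, and an order-respecting edge $e\in c$ may be replaced by its top vertex $\tau(e)$, lowering the dimension by one. Assign to each unblocked vertex $v\in c$ the priority $v$ and to each order-respecting edge $e\in c$ the priority $\tau(e)$. The rule defining $W$ is then: if $c$ has neither an unblocked vertex nor an order-respecting edge, declare $c$ critical; otherwise, pair $c$ with the cell obtained by performing the swap on the feature of minimum priority. Note an unblocked vertex $v$ and an order-respecting edge $e$ with $\tau(e)=v$ cannot both appear in the same $c$, since that would force $v$ to coincide with an endpoint of $e$ and violate disjointness of cells, so the minimum-priority feature is uniquely determined.

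Once the rule is in place, the three cases of the theorem are formal. Critical cells are exactly those with no unblocked vertex and no order-respecting edge, which is case one. A cell is collapsible precisely when its minimum-priority feature is an order-respecting edge $e$, equivalent to saying $c$ contains an order-respecting edge whose $\tau(e)$ is strictly less than every unblocked vertex in $c$, which is case two. Every remaining cell has an unblocked vertex as its minimum-priority feature and is therefore redundant.

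The main obstacle is verifying that $W$ so defined is actually a legitimate acyclic partial matching. Involutivity requires that if $c$ is paired with $c'$ by the rule, then applying the rule to $c'$ returns $c$; this reduces to a careful local analysis of how the swap $v\leftrightarrow e_v$ affects the blocked/unblocked status of the other vertices of $c$ and the order-respecting/order-disrespecting status of the other edges, and in particular a check that no feature of smaller priority can appear or disappear under the swap. Acyclicity requires ruling out closed gradient paths, which one handles by showing that the priority of the active feature strictly decreases along any $V$-path, forcing termination. Both checks hinge on the local geometry of the tree and on the way the vertex order and direction numbering interact with the definitions of blocked and order-disrespecting, and this bookkeeping is where the real content of the Farley--Sabalka argument lives.
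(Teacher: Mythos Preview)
The paper does not prove this theorem at all: it is quoted as Theorem~\ref{thm:critcells} with the attribution \cite{FarleySabalka} and no argument is given, since the result is used here purely as input to the topological complexity computation. So there is no ``paper's own proof'' to compare against.

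That said, your proposal is an accurate outline of the original Farley--Sabalka argument. The priority rule you describe (compare an unblocked vertex $v$ with an order-respecting edge $e$ via $\tau(e)$, and swap the feature of least priority) is exactly their discrete gradient vector field $W$, and you correctly observe that the trichotomy in the theorem is then a formal restatement of the rule. You are also right that the substance lies in checking that $W$ is a well-defined acyclic matching; your sketch of involutivity and acyclicity is pointed in the right direction, though the claim that ``the priority of the active feature strictly decreases along any $V$-path'' is not quite the invariant Farley and Sabalka use, and the involutivity check requires verifying that after the swap $v\mapsto e_v$ the edge $e_v$ is indeed order-respecting and still of minimal priority in the new cell, which takes a small case analysis you have not supplied. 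None of this is a gap in strategy, only in execution.
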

 
Since we will be focused primarily on critical cells, we describe a procedure to construct a critical $k$-cell in $UD^n(\Gamma).$ First, notice that if $c$ is any $k$-cell, then $c$ must consist of $k$ edges and $n-k$ vertices.  If $c$ is to be critical, each edge must be order-disrespecting, and each vertex must be blocked.  Consider an edge $e$ in $c.$  If $e$ is to be order-disrespecting, then $\iota(e)$ must be an essential vertex, or else there would be no possible vertex $v$ that could satisfy $\iota(e_v)=\iota(e)$ other than $v=\tau(e),$ but $\tau(e)$ cannot be included in a cell which contains $e$ (by this, we mean that $\tau(e)$ cannot appear as a vertex in the list of vertices and edges which define $c$).  Furthermore, the direction $d$ from $\iota(e)$ on which $e$ falls must be at least 2, and the direction $d'$ from $\iota(e)$ on which $v$ falls must satisfy $0<d'<d.$  Note also that if $v$ causes $e$ to be order-disrespecting, then $v$ is automatically blocked.  This also implies that if there is a critical $k$-cell in $UD^n(\Gamma),$ then $k\le m(\Gamma)$ and $n\ge 2k.$  The remaining $n-2k$ vertices of $c$ can be easily chosen so that they are blocked in $c.$ The cell on the left in Figure \ref{fig:Cells} is a critical 1-cell in $UD^2(\Gamma).$  Figure \ref{fig:CriticalCells} gives an example of a critical 3-cell in $UD^8(\Gamma).$  Note that more vertices of degree 2 must be added to the tree $\Gamma$ above so that it is sufficiently subdivided for $n=8.$  In this example and what follows, we make no indication of the total number of vertices in a sufficiently subdivided tree.

\CriticalCells

The discussion above is similar to the proof of the following theorem:

\begin{thm}{\rm\cite{FarleySabalka}}\qua \label{thm:kcells}
Let $\Gamma$ be a tree, and let $k=\min\left\{\left\lfloor\frac{n}{2}\right\rfloor,m(\Gamma)\right\}.$  If $c$ is any critical cell in $UD^n(\Gamma),$ then $\mathrm{dim}(c)\le k.$  Furthermore, $UD^n(\Gamma)$ deformation retracts onto the space $(UD^n(\Gamma))'_k,$ which consists of the $k$-skeleton of $UD^n(\Gamma)$ with the redundant $k$-cells removed.
\end{thm}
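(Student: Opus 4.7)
\emph{Proof proposal.} The plan has two parts, matching the two claims.

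For the dimension bound, I would combine Theorem \ref{thm:critcells} with the pairwise disjoint-closure requirement on cells of $UD^n(\Gamma)$. A critical $j$-cell $c$ consists of $j$ order-disrespecting edges and $n - j$ blocked vertices. The discussion just before the statement shows that each order-disrespecting edge $e$ has $\iota(e)$ essential and requires a witness vertex $v \in c$ with $\iota(e_v) = \iota(e)$. Since any two edges of $c$ must have disjoint closures, they cannot share an endpoint, so the values $\iota(e)$ over the $j$ edges of $c$ are distinct essential vertices, yielding $j \le m(\Gamma)$. Moreover, each vertex $v$ uniquely determines its parent $\iota(e_v)$ (the neighbor of $v$ with smaller label), so witnesses for edges at different essential vertices are themselves distinct; hence at least $j$ of the $n - j$ vertex-slots of $c$ must be occupied by witnesses, which forces $n \ge 2j$, i.e., $j \le \lfloor n/2 \rfloor$. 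Combining, $\dim(c) \le k$.

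For the deformation retract, I would invoke Forman's discrete Morse theory applied to the Farley--Sabalka vector field $W$. The general fact needed is that if $K$ is a subcomplex of $UD^n(\Gamma)$ containing all critical cells, and each matched pair $(c, W(c))$ lies entirely in $K$ or entirely outside $K$, then $UD^n(\Gamma)$ deformation retracts onto $K$. I would take $K = (UD^n(\Gamma))'_k$ and verify both conditions: (i) by the first part all critical cells have dimension $\le k$ and are (tautologically) not redundant, so they lie in $K$; (ii) every redundant cell in $K$ has dimension strictly less than $k$, and its collapsible partner has dimension $\le k$ and is collapsible hence not a redundant $k$-cell, so also lies in $K$; dually, every redundant cell of dimension $\ge k$ is excluded from $K$ together with its collapsible partner (of dimension $\ge k+1$, hence also excluded). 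The deformation retract onto $K = (UD^n(\Gamma))'_k$ then follows.

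The main subtle points lie in the two combinatorial observations of the first paragraph: the disjoint-closure constraint forces the essential vertices $\iota(e)$ to be distinct across the edges of $c$, and the rooted-tree structure (the uniqueness of the parent $\iota(e_v)$) forces the witnesses for different edges to be distinct vertices of $c$. Once these are in hand, the dimension count is bookkeeping, and the Morse-theoretic step is a standard consequence of Theorem \ref{thm:critcells}.
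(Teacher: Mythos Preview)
Your proposal is correct and aligns with the paper's treatment. Note, however, that the paper does not actually prove this theorem: it is quoted from \cite{FarleySabalka}, and the paper only offers the informal discussion preceding the statement (for the dimension bound) together with the single sentence ``The second statement of this theorem follows from results from discrete Morse theory.'' Your first paragraph is a cleaned-up version of exactly that preceding discussion---the paper already observes that each order-disrespecting edge has an essential $\iota(e)$ and needs a witnessing blocked vertex, and concludes $k\le m(\Gamma)$ and $n\ge 2k$; you add the explicit reasons (disjoint closures, uniqueness of parents) for why the $\iota(e)$'s and the witnesses are distinct, which the paper leaves implicit. Your second paragraph supplies a concrete Morse-theoretic mechanism where the paper gives none; the verification that matched pairs lie entirely inside or entirely outside $(UD^n(\Gamma))'_k$ is correct, though the ``general fact'' you invoke should ultimately be traced back to Forman's collapsing results rather than asserted outright.
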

The second statement of this theorem follows from results from discrete Morse theory.  Now, before discussing the cohomology ring $H^*(UD^n(\Gamma);\Z),$ we describe the equivalence relation on cells given in \cite{FarleySabalkaCohomology}.  Given two cells $c$ and $c'$ of $UD^n(\Gamma),$  define $``\sim"$ by $c\sim c'$ if and only if $c$ and $c'$ share the same edges (so in particular $c$ and $c'$ are of the same dimension), and if $E$ is the set of edges in $c$ (and in $c'$), and $\mathcal{C}=\Gamma-E,$ then for every connected component $C$ of $\mathcal{C},$ the number of vertices of $c$ in $C$ equals the number of vertices of $c'$ in $C.$  Here and in what follows, we use $E$ to denote both the set of edges and the union of the edges in $E.$  Context should make the desired interpretation of $E$ clear.  Let $[c]$ denote the equivalence class of $c.$  Now, given two equivalence classes $[c]$ and $[d],$ write $[d]\le[c]$ if  there are representatives $c\in[c]$ and $d\in[d]$ such that $d$ is obtained from $c$ by removing some (possibly zero) edges of $c$ and replacing each of these edges with one of its endpoints.  Farley and Sabalka show the following:

\begin{lemma}{\rm\cite{FarleySabalkaCohomology}}\qua \label{lemma:partialorder}
The relation $``\le"$ is a well-defined partial order with the following properties:
 \begin{enumerate}
 \item If a collection of distinct equivalence classes of 1-cells $\{[c_1],\dots,[c_k]\}$ has an upper bound, then it has a least upper bound and if $e_i$ is the unique edge in $[c_i]$ (that is, every cell in $[c_i]$ contains the edge $e_i$), then $e_{i_1}\cap e_{i_2}=\emptyset$ for $i_1\ne i_2.$
 \item For any $k$-cell $c$ of $ UD^n(\Gamma),$ there is a unique collection of equivalence classes of 1-cells $\{[c_1],[c_2],\dots,[c_k]\}$ having $[c]$ as its least upper bound.  
 \end{enumerate}
\end{lemma} 

Farley and Sabalka also introduced the idea of a ``cloud diagram" to represent an equivalence class.  These diagrams consist of a collection $E$ of edges and an indication of the number of vertices in each connected component of $\Gamma-E.$  The components of $\Gamma-E$ are called \emph{clouds.}  If $f(C)$ is the number of vertices in a cloud $C$, then 
\[
\sum_{\text{Clouds } C} f(C)=n-|E|.
\]
Figure \ref{fig:Clouds} gives three examples of cloud diagrams.  The cloud diagram on the left represents the class $[c],$ with $c$ as in Figure \ref{fig:CriticalCells}.

\Clouds

We will sometimes call the number of vertices in a cloud $C$ the \emph{value of $C$}.  Cloud diagrams also provide a convenient way to determine if $[d]\le[c].$  If this is the case, then the set of edges in the diagram for $[d]$ must be a subset of the set of edges in the diagram for $[c],$ which implies that each cloud in the diagram for $[c]$ must be contained in some cloud in the diagram for $[d].$ For each cloud $D$ in the diagram for $[d],$ the number of edges of $[c]$ which are contained in $D$ plus the sum of the values of the clouds of $[c]$ which are contained in $D$ must equal the value of $D.$  In particular, if the set of edges in $[c]$ equals the set of edges in $[d],$ then the diagrams for $[c]$ and $[d]$ have the same clouds, and $[c]$ and $[d]$ are comparable if and only if the values of each cloud are the same in both diagrams, in which case $[c]=[d].$ In Figure \ref{fig:Clouds}, the middle diagram is a cloud diagram for a class $[d]$ with $[d]\le[c]$ and the diagram on the right is a cloud diagram for a class $[d']$ which is comparable to neither $[c]$ nor $[d].$

Farley and Sabalka determine the structure of the cohomology ring $H^*(UD^n(\Gamma);\Z)$ by first constructing a space $\widehat{UD^n(\Gamma)}$ as follows.  For each equivalence class of $1$-cells $[c],$ let $S^1_{[c]}$ denote a circle with the usual cell structure consisting of a single open 1-cell $e^1_{[c]}$ and a single 0-cell.  Then, each open $k$-cell of the product $\prod_{[c]} S^1_{[c]},$ taken over all equivalence classes of 1-cells of $UD^n(\Gamma),$ is of the form $e^1_{[c_1]}\times\cdots\times e^1_{[c_k]},$ where we refrain from writing factors corresponding to 0-cells, and such a cell corresponds to a collection of equivalence classes of $1$-cells $\{[c_1],\dots,[c_k]\}.$  The space $\widehat{UD^n(\Gamma)}$ is obtained from $\prod_{[c]} S^1_{[c]}$ by removing open $k$-cells of the form $e^1_{[c_1]}\times\cdots\times e^1_{[c_k]}$ such that the corresponding collection $\{[c_1],\dots,[c_k]\}$ does not have an upper bound.  Then, each $k$-cell $\sigma$ in $\widehat{UD^n(\Gamma)}$ corresponds to a collection $\{[c_1],\dots,[c_k]\}$ which has an upper bound, and therefore a least upper bound $[c],$ and the cell $\sigma$ can be labeled by $[c].$  For each distinct equivalence class $[c]$ of cells in $UD^n(\Gamma),$ there is exactly one cell labeled $[c]$ in $\widehat{UD^n(\Gamma)}.$

Now, for a $k$-cell labeled $[c]$ in $\widehat{UD^n(\Gamma)},$ let $\hat\phi_{[c]}$ denote the $k$-cocycle in $C^*(\widehat{UD^n(\Gamma)};\Z)$ defined by 
\[
\hat\phi_{[c]}([c'])
=\begin{cases}
1,&\text{if }[c']=[c]\\
0,&\text{if }[c']\ne[c].
\end{cases}
\]
We have the following description of $H^*(\widehat{UD^n(\Gamma)};\Z):$
\begin{thm}{\rm\cite{FarleySabalkaCohomology},\cite{FarleyCohomology}}\quad \label{thm:UDnHatCohomology}
Let $\{[c_1],\dots,[c_M]\}$ be the collection of all equivalence classes of 1-cells in $UD^n(\Gamma).$  The cohomology ring $H^*(\widehat{UD^n(\Gamma)};\Z)$ is isomorphic to the quotient ring
\[
\Lambda[[c_1],\dots,[c_M]]/I,
\]
where $\Lambda[[c_1],\dots,[c_M]]$ is the integral exterior ring generated by the collection of all equivalence classes of 1-cells, and $I$ is the ideal generated by products $[c_{i_1}]\cdot[c_{i_2}]\cdots[c_{i_k}]$ such that the collection $\{[c_{i_1}],[c_{i_2}],\dots,[c_{i_k}]\}$ does not have an upper bound.

The isomorphism $H^*(\widehat{UD^n(\Gamma)})\to \Lambda[[c_1],\dots,[c_M]]/I$ sends $\hat{\phi}_{[c]}$ to $[c_1]\cdot[c_2]\cdots[c_k],$ where $\{[c_1],[c_2],\dots,[c_k]\}$ is the unique collection of equivalence classes of 1-cells which has $[c]$ as its least upper bound, arranged so that $\iota(e_i)<\iota(e_{i+1})$ for each $i$, where $e_i$ the unique edge in $[c_i].$
\end{thm}
The isomorphism in Theorem \ref{thm:UDnHatCohomology} depends on a choice of orientations of the cells and an ordering of the factors in $\widehat{UD^n(\Gamma)}.$  The details are given in \cite{FarleySabalkaCohomology} and \cite{FarleyCohomology}, but will be omitted here.

Similarly, for each equivalence class $[c]$ of $k$-cells of $UD^n(\Gamma),$ define a cellular cocycle $\phi_{[c]}\in C^*(UD^n(\Gamma);\Z)$ by
\[
\phi_{[c]}(c')=\begin{cases}1,&\text{if }c'\sim c\\
0,&\text{otherwise} .
\end{cases}
\]
These cocycles will be called standard cocycles, and if there is a (unique) critical cell in $[c],$ then $\phi_{[c]}$ is called a critical cocycle.  Since standard cocycles are determined by equivalence classes, cloud diagrams can also be used to describe standard cocycles.   

\begin{thm}{\rm\cite{FarleySabalkaCohomology},\cite{FarleyCohomology}}\quad \label{thm:cohomology} 
\begin{enumerate}
\item There is a well-defined map $q\co UD^n(\Gamma)\to \widehat{UD^n(\Gamma)},$ and the induced homomorphism $q^*\co C^*(\widehat{UD^n(\Gamma)})\to C^*(UD^n(\Gamma))$ sends the cocycle $\hat\phi_{[c]}$ to the standard cocycle $\phi_{[c]}.$ 
\item  The collection of critical cocycles represents a basis for $H^*(UD^n(\Gamma);\Z).$
\item  For any cell $c,$ we have $\phi_{[c]}^2=0.$
\item  If $c$ is a $k$-cell, and $\{[c_1],\dots,[c_k]\}$ is the unique collection of equivalence classes of 1-cells with $[c]$ as its least upper bound, arranged so that $\iota(e_i)<\iota(e_{i+1})$ for each $i,$ where $e_i$ is the unique edge in $[c_i],$ then
\[
\phi_{[c_1]}\cdots\phi_{[c_k]}=\phi_{[c]}.
\]
\item If $\{[c_1],\dots,[c_j]\}$ is any collection of equivalence classes of cells with no upper bound, then
\[
\phi_{[c_1]}\cdots\phi_{[c_j]}=0.
\]
\end{enumerate}
\end{thm}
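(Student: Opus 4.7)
The plan is to build the theorem in two stages: establish part (1) by invoking Forman's discrete Morse theory applied to the Farley--Sabalka gradient vector field $W$, and then derive parts (2), (3), and (4) from the cubical cup-product formula on $UD^n(\Gamma)$. For (1), Theorem \ref{thm:critcells} combined with the Morse-theoretic computation of $H^*$ forces the rank of $H^*(UD^n(\Gamma);\Z)$ in each dimension to match the number of critical cells, so it suffices to show that each $\phi_{[c]}$ is a cellular cocycle and that these cocycles represent independent classes. Cocycle-ness amounts to the assertion that for every $(k{+}1)$-cell $c''$, the signed count of codimension-one faces of $c''$ lying in $[c]$ vanishes; I would verify this by arguing that $W$ pairs the non-critical faces of $c''$ within each equivalence class with opposite incidence signs, so the coboundary sum collapses in canceling pairs.

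For parts (3) and (4), the key observation is that $UD^n(\Gamma)$ is naturally a cube complex: a $k$-cell $c'' = \{e_1,\dots,e_k,v_1,\dots,v_{n-k}\}$ is a $k$-cube with coordinate edges $e_1,\dots,e_k$. I would order these by $\iota(e_1)<\cdots<\iota(e_k)$ and apply the standard cubical cup product, evaluating $\phi_{[c_1]}\cup\cdots\cup\phi_{[c_k]}$ on $c''$ as the product of the values $\phi_{[c_i]}$ on the $i$-th canonical $1$-face of $c''$ (the face obtained by collapsing every coordinate edge other than $e_i$ to an appropriate endpoint). This product is non-zero precisely when $[c'']$ is an upper bound for $\{[c_1],\dots,[c_k]\}$. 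Lemma \ref{lemma:partialorder} singles out a unique least upper bound, and a vertex-counting check on cloud diagrams then shows that in the cup product only the class $[c]$ itself contributes, giving (3). Part (4) is immediate: if no upper bound exists there is no cell on which the product is non-zero.

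Part (2) reduces to a similar combinatorial observation. Given a critical $k$-cell $c$ with edges $e_1,\dots,e_k$, evaluating $\phi_{[c]}^2$ on a $2k$-cell $c''$ requires both the front and back $k$-faces of $c''$ to lie in $[c]$ and hence to contain every one of $e_1,\dots,e_k$; but no single cell of $UD^n(\Gamma)$ contains the same edge twice, so the product vanishes on the cochain level. The hard part of this plan is the coboundary calculation underlying part (1). Showing that $\phi_{[c]}$ is genuinely a cocycle in $C^*(UD^n(\Gamma);\Z)$ requires a careful combinatorial analysis of how cells in $[c]$ sit as codimension-one faces of $(k{+}1)$-cells and how the matching $W$ interacts with the equivalence relation $\sim$; in particular, one must pin down why $\sim$, rather than cell equality, is the correct granularity for cochains representing classes in $H^*$. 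Once that is established, the cup-product computations in parts (2)--(4) reduce to bookkeeping with cloud diagrams and careful tracking of signs.
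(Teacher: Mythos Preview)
The paper does not prove this theorem at all: it is stated with citations to \cite{FarleySabalkaCohomology} and \cite{FarleyCohomology} and then used as a black box, with only the remark that the universal coefficient theorem passes the statements to rational coefficients. There is therefore nothing in the paper to compare your proposal against.

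That said, your sketch is broadly in the spirit of how Farley and Farley--Sabalka actually establish these facts, and the cubical cup-product reasoning you outline for (3) and (4) is the right mechanism. Two cautionary notes. First, your argument for (2) is slightly garbled: the point is not that ``no single cell contains the same edge twice,'' but that in a $2k$-cube the front and back $k$-faces carry \emph{disjoint} sets of $k$ coordinate edges, so they cannot both lie in $[c]$ (which forces a fixed edge set $\{e_1,\dots,e_k\}$). Also note that statement (2) as written is only sensible for critical cells of positive dimension; the unique critical $0$-cell represents the unit in $H^0$ and certainly does not square to zero. Second, your plan for (1) understates the real work: showing that each $\phi_{[c]}$ is a cocycle is not just a matter of $W$ ``pairing faces in canceling pairs,'' but requires the specific combinatorics of how the equivalence relation $\sim$ interacts with the cubical boundary map on $UD^n(\Gamma)$, which is the substantive content of the cited papers.
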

Here, for any equivalence class $[c],$ we use $\phi_{[c]}$ to denote both the standard cocycle $\phi_{[c]}$ and the cohomology class it represents.  It follows from the universal coefficient theorem that the same statements hold true for rational cohomology, where we identify $\phi_{[c]}\in H^*(UD^n(\Gamma);\Z)$ with the corresponding class in $H^*(UD^n(\Gamma);\Q).$  Again, Theorem \ref{thm:cohomology} depends on a choice of orientations of cells, but we will omit these details.

If $\{[c_1],\dots,[c_k]\}$ is a collection of equivalence classes of $1$-cells which has a least upper bound $[\omega],$ then, $\phi_{[c_1]}\cdots\phi_{[c_k]}=\phi_{[\omega]},$ and if $[\omega]$ contains a critical cell, then $\phi_{[\omega]}$ is a critical cocycle, and therefore represents a basis element in $H^*(UD^n(\Gamma)).$  If $[\omega]$ does not contain a critical cell, then $\phi_{[\omega]}$ is cohomologous to a linear combination of critical cocycles.  In \cite{FarleyCohomology}, Farley gives a procedure to rewrite the cohomology class of $\phi_{[\omega]}$ in terms of critical cocycles, which we recall here.

If $[c]$ is an equivalence class of $k$-cells and $e$ is an edge of $[c]$ with $\iota(e)=v$ and $\deg(\tau(e))\le2,$ and $\mathcal{C}$ is the cloud diagram for the standard $k$-cocycle $\phi_{[c]},$ then a $(k-1)$-dimensional cochain $\mathcal{R}_{\mathcal{C},v}$ is defined as follows.  The support of $\mathcal{R}_{\mathcal{C},v}$ consists of $(k-1)$-cells $c_0$ such that
\begin{enumerate}[(i)]
\item $E(c_0)=E(c)-\{e\}$ (where for any cell $\sigma$ of $UD^n(\Gamma),\ E(\sigma)$ denotes the set of edges in $\sigma),$
\item if $C$ is any component of $\Gamma-E(c),$ other than the component $C_\iota$ which falls in the 0-direction from $v$ or the component $C_\tau$ which is adjacent to $\tau(e),$ then the number of vertices of $c_0$ in $C$ equals the number of vertices of $c$ in $C,$
\item the number of vertices of $c_0$ in $C_\iota\cup\{\iota(e)\}$ equals the number of vertices of $c$ in $C_\iota,$ and 
\item the number of vertices of $c_0$ in $C_\tau\cup\{\tau(e)\}$ is one more than the number of vertices of $c$ in $C_\tau.$  
\end{enumerate}
For each cell $c_0$ that satisfies these conditions, put $\mathcal{R}_{\mathcal{C},v}(c_0)=1.$  Let $E(\mathcal{R}_{\mathcal{C},v})$ denote the set of edges in any cell in the support of $\mathcal{R}_{\mathcal{C},v}.$  The cochain $\mathcal{R}_{\mathcal{C},v}$ can be described with a cloud diagram, where the union of the clouds around $v$ forms a connected component of $\Gamma-E(\mathcal{R}_{\mathcal{C},v}).$  For example, the left side of Figure \ref{fig:RCv} gives a cloud diagram $\mathcal{C}$ for a standard cocycle $\phi_{[c]}$ (which is not a critical cocycle), and the right side gives the cloud diagram for the cochain $\mathcal{R}_{\mathcal{C},v},$ where $v=\iota(e).$  Here, we have emphasized the clouds by indicating them with dotted lines.  For each cloud $C$ in the diagram for $\mathcal{R}_{\mathcal{C},v},$ let $f_{\mathcal{C},v}(C)$ denote the number of vertices in $C.$
\RCv

Farley shows that up to sign, the coboundary $\delta(\mathcal{R}_{\mathcal{C},v})$ is given by the following, where $d(v)$ denotes the degree of $v$:
\[
\delta(\mathcal{R}_{\mathcal{C},v})=\sum_{i=1}^{d(v)-1}\Theta_{\mathcal{C},v,\tau,i}-\sum_{i=1}^{d(v)-1}\Theta_{\mathcal{C},v,\iota,i}
\]
where $\Theta_{\mathcal{C},v,\tau,i}$ is the standard $k$-cocycle $\phi_{[c']}$ such that $E(c')=E(\mathcal{R}_{\mathcal{C},v})\cup\{e'_i\},$ where $e'_i$ is the edge in direction $i$ from $v,$ and if $C_i$ is the cloud in direction $i$ from $v$ in the cloud diagram for $\phi_{[c']},$  then the number of vertices in any cloud $C'$ in the cloud diagram for $\phi_{[c']}$ is given by 
\[
f_{[c']}(C')=
\begin{cases}
f_{\mathcal{C},v}(C'),&\text{if }C'\ne C_i\\
f_{\mathcal{C},v}(C')-1,&\text{if }C'=C_i.
\end{cases}
\]
Note there is a slight abuse of notation, in the sense that the clouds in the diagram for $\mathcal{R}_{\mathcal{C},v}$ are slightly different than those in the diagram for $\phi_{[c']}.$  For example, the cloud in direction 0 from $v$ in the diagram for $\mathcal{R}_{\mathcal{C},v}$ includes $\iota(e)$, whereas the cloud in direction 0 from $v$ in the diagram for $\phi_{[c']}$ does not include this vertex.  This should cause no confusion.  It is possible that $f_{[c']}(C')$ is negative; if this is the case, then $\phi_{[c']}$ is defined to be zero.  For example, Figure \ref{fig:ThetaTau} gives the sum $\sum_{i=1}^{d(v)-1}\Theta_{\mathcal{C},v,\tau,i}$ with $\mathcal{C}$ as in Figure \ref{fig:RCv}; here, $\Theta_{\mathcal{C},v,\tau,1}$ is zero.

\ThetaTau

Similarly, $\Theta_{\mathcal{C},v,\iota,i}$ is the standard cocycle $\phi_{[c'']},$ where $E(c'')=E(\mathcal{R}_{\mathcal{C},v})\cup\{e'_i\},$ and
\[
f_{[c'']}(C')=
\begin{cases}
f_{\mathcal{C},v}(C'),&\text{if }C'\ne C_0\\
f_{\mathcal{C},v}(C')-1,&\text{if }C'=C_0.
\end{cases}
\]
Here $C_0$ is the cloud in direction 0 from $v$ in the diagram for $\phi_{[c'']}.$  Figure \ref{fig:ThetaIota} gives the sum $\sum_{i=1}^{d(v)-1}\Theta_{\mathcal{C},v,\iota,i}$ with $\mathcal{C}$ as in Figure \ref{fig:RCv}.

\ThetaIota

Denote by $\widehat{\delta\mathcal{R}_{\mathcal{C},v}}$ the unique cochain in $C^*(\widehat{UD^n\Gamma})$ which maps to $\delta\mathcal{R}_{\mathcal{C},v}$.  If $[\omega]$ is an equivalence class of $k$-cells which does not contain a critical cell, then by the classification of critical cells, there must be some edge $e$ such that $e$ is order-respecting in every cell in $[\omega].$  In this case, call $e$ a \emph{bad edge} of $[\omega].$

\begin{thm}{\rm\cite{FarleyCohomology}\qua}\label{thm:Kernel}
Let $J\subset H^*(\widehat{UD^n(\Gamma)};\Z)$ denote the ideal generated by the classes $\widehat{\delta\mathcal{R}_{\mathcal{C},v}}$ where $\mathcal{C}$ is a cloud diagram containing at least one bad edge $e$ and $v=\iota(e).$  Then, we have
\[
H^*(UD^n\Gamma;\Z)\cong H^*(\widehat{UD^n\Gamma};\Z)/J
\]
\end{thm}

In the proof of Lemma \ref{lemma:products}, we will be interested in writing the cohomology class of a standard cocycle $\phi_{[\omega]}$ as a linear combination of basis elements (i.e. cohomology classes of critical cocycles), and comparing $\phi_{[\omega]}$ with these basis elements.  If $[\omega]$ contains a critical cell, then $\phi_{[\omega]}$ itself represents a basis element.  If $[\omega]$ does not contain a critical cell, the coboundaries $\delta\mathcal{R}_{\mathcal{C},v}$ give a way to rewrite the cohomology class of $\phi_{[\omega]}$ in terms of critical cocycles as follows.  Let $\mathcal{C}$ be the cloud diagram for $\phi_{[\omega]}.$  The class $[\omega]$ necessarily contains at least one bad edge $e$ which falls in direction $d_0$ from some vertex $v.$  We will again assume here that $\tau(e)$ has degree less than 3 (the case in which $\deg(\tau(e))\ge 3$ is addressed in \cite{FarleyCohomology}, but will not be needed here). Since $e$ is a bad edge, it must be the case that for all $0<i<d_0,$ we have $f_{[\omega]}(C_i)=f_{\mathcal{C},v}(C_i)=0,$ so the first non-zero term in $\sum \Theta_{\mathcal{C},v,\tau,i}$ is $\phi_{[\omega]},$ and any other non-zero term (if there are any) is of the form $\phi_{[\omega']},$ where $[\omega']$ contains an edge in direction $j>d_0$ from $v.$  Any non-zero term in $\sum\Theta_{\mathcal{C},v,\iota,i}$ (if there are any) is of the form $\phi_{[\omega''],}$ where $[\omega'']$ is an equivalence class of $k$-cells with the property that if $\{[\omega_1],\dots,[\omega_k]\}$ and $\{[\omega''_1],\dots,[\omega''_k]\}$ are the unique collections of equivalence classes of 1-cells which have $[\omega]$ and $[\omega'']$ as their respective least upper bounds, then there is some $i$ and $j$ such that $[\omega_i]$ contains the edge $e$ and $[\omega''_j]$ contains an edge with initial point at $v=\iota(e),$ but $f_{[\omega_j'']}(C_0)<f_{[\omega_i]}(C_0).$ Here, $C_0$ is the cloud in direction 0 from $v$ in the diagrams for $[\omega_i]$ and $[\omega''_j].$

Therefore, on the level of cohomology, we have $\phi_{[\omega]}=-A+B,$ where $A$ is a sum of standard cocycles $\phi_{[\omega']}$ such that $[\omega']$ has an edge $e'$ whose initial point is $v$, but $e'$ falls in a direction from $v$ greater than that in which $e$ falls, and $B$ is a sum of standard cocycles $\phi_{[\omega'']}$ as above.  It is possible that some of the terms in $A$ or $B$ are again standard cocycles corresponding to classes which do not contain critical cells (such as the three standard cocycles described in Figure \ref{fig:ThetaIota}), but if this is the case, we may rewrite each such cocycle using the procedure above.  Farley shows that this process eventually terminates and after repeatedly applying the procedure, we may write $\phi_{[\omega]}=\Sigma,$ where $\Sigma$ is a linear combination of critical cocycles.  Suppose $\phi_{[\widetilde{\omega}]}$ is a critical cocycle which appears in $\Sigma.$  Let $\{[\omega_1],\dots,[\omega_k]\}$ and $\{[\widetilde{\omega}_1],\dots,[\widetilde{\omega}_k]\}$ be the unique collections of equivalence classes of 1-cells which have $[\omega]$ and $[\widetilde{\omega}]$ as their respective least upper bounds.  We wish to compare the classes in each collection.  

First, note that the cloud diagram for any term appearing in $\delta(\mathcal{R}_{\mathcal{C},v})$ must contain an edge whose initial point is $\iota(e)=v$ and every edge $f\ne e$ in $[\omega]$ is also an edge in the cloud diagram for each term in $\delta(\mathcal{R}_{\mathcal{C},v}).$  Therefore, for any edge $e'$ in $[\omega],$ the cloud diagram of any term in $\Sigma$ must contain an edge who initial point is $\iota(e')$.  In other words, for each $i,$ there is some $s$ such that if $e'$ is the unique edge in $[\omega_i]$ and $e''$ is the unique edge in $[\widetilde{\omega}_s],$ then $\iota(e')=\iota(e'').$  Let $C_0$ be the cloud in direction 0 from $v$ in the cloud diagrams for $[\omega_i]$ and $[\widetilde{\omega}_s].$  At no point in the rewriting process do we add more vertices to the cloud $C_0.$  If $e'$ is a bad edge in $[\omega],$ then either (i) $f_{[\omega_i]}(C_0)=f_{[\widetilde{\omega}_s]}(C_0)$ and $\tau(e'')>\tau(e')$, or (ii) $f_{[\omega_i]}(C_0)>f_{[\widetilde{\omega}_s]}(C_0).$  If $e'$ is not a bad edge in $[\omega],$ then it may or may not become a bad edge at some stage of the rewriting process.  If $e'$ never becomes a bad edge, then we must have $e''=e'$ and $f_{[\omega_i]}(C_0)=f_{[\widetilde{\omega}_s]}(C_0).$  If $e'$ does become bad, then as above, either (i) $f_{[\omega_i]}(C_0)=f_{[\widetilde{\omega}_s]}(C_0)$ and $\tau(e'')>\tau(e'),$ or (ii) $f_{[\omega_i]}(C_0)>f_{[\widetilde{\omega}_s]}(C_0).$ 

Note that if $D_0$ is the cloud in the 0-direction from $v$ in the diagram for $[\omega]$ (so $D_0$ is contained in $C_0$), then it is possible that vertices are added to $D_0$ in the rewriting process if some edge with an initial point on the geodesic from $\iota(e')$ to $\ast$ becomes bad at some stage, but for each vertex added to $D_0,$ there must be some other cloud $D_1$ contained in $C_0$ which loses a vertex.  The observations in the preceding paragraph are similar to Farley's notion of the \emph{rank} of a cell $c$ defined in \cite{FarleyCohomology}.  

The coboundaries $\delta\mathcal{R}_{\mathcal{C},v}$ illustrate the complicated nature of the cohomology ring $H^*(UD^n\Gamma).$  The delicacy of the ring structure is studied further in \cite{SabalkaRigid}, for example, but the above is sufficient for what follows.

\section{Motion planning of configuration spaces of trees}\label{sec:treeMP}

Before proving Theorem \ref{thm:Main}, we first mention some of the tools for determining the topological complexity of any space.

\begin{thm}{\rm \cite{FarberTC}}\qua \label{thm:htpyinv}
$TC(X)$ is homotopy-invariant.  That is, if $X$ and $Y$ are homotopic, then $TC(X)=TC(Y).$
\end{thm}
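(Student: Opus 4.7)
The plan is to show $TC(X) \le TC(Y)$ whenever there is a homotopy equivalence $f \colon X \to Y$; then applying the same argument to a chosen homotopy inverse $g \colon Y \to X$ yields the reverse inequality, giving equality. So I fix a homotopy inverse $g$ of $f$ together with a homotopy $H \colon X \times [0,1] \to X$ between $\mathrm{id}_X$ and $g \circ f$.

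First, I would pull back a motion planning algorithm. Let $V_1, \dots, V_k$ be an open cover of $Y \times Y$ with $k = TC(Y)$, and let $t_i \colon V_i \to P(Y)$ be the associated continuous sections. Set $U_i = (f \times f)^{-1}(V_i)$; these form an open cover of $X \times X$ by continuity of $f \times f$.

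Next, I would build continuous sections $s_i \colon U_i \to P(X)$ by a three-step concatenation. For $(x, x') \in U_i$, the naive choice $g \circ t_i(f(x), f(x'))$ produces a path from $g(f(x))$ to $g(f(x'))$ rather than from $x$ to $x'$, and the homotopy $H$ is used to bridge the gap. Define $s_i(x, x')$ as the concatenation of $t \mapsto H(x, t)$ (a path from $x$ to $g(f(x))$), then $g \circ t_i(f(x), f(x'))$ (a path from $g(f(x))$ to $g(f(x'))$), and finally $t \mapsto H(x', 1 - t)$ (a path from $g(f(x'))$ to $x'$). This is a continuous path in $X$ from $x$ to $x'$.

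The main obstacle is showing that $s_i$ is continuous into $P(X)$ equipped with the compact-open topology. This reduces to three standard facts: the map $x \mapsto H(x, \cdot)$ from $X$ to $P(X)$ is continuous (via the exponential law, since $[0,1]$ is locally compact Hausdorff); post-composition with the continuous map $g$ induces a continuous map $P(Y) \to P(X)$; and the concatenation operation on $P(X) \times P(X) \times P(X) \to P(X)$ is continuous. Granting these, we get $TC(X) \le k = TC(Y)$, and running the argument with $g$ in the role of $f$ completes the proof.
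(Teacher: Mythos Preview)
The paper does not give its own proof of this statement: Theorem~\ref{thm:htpyinv} is simply quoted from Farber's original paper \cite{FarberTC}, with no argument supplied. Your proof is the standard one (and essentially the one Farber gives): pull back an optimal motion planning cover along $f\times f$, and use the homotopy $H$ from $\mathrm{id}_X$ to $g\circ f$ to correct the endpoints of $g\circ t_i(f(x),f(x'))$. The argument is correct as written, including the continuity claims, so there is nothing to compare against in this paper.
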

Since we assume $\Gamma$ is sufficiently subdivided, the spaces $C^n(\Gamma)$ and $UC^n(\Gamma)$ are homotopic to $D^n(\Gamma)$ and $UD^n(\Gamma),$ respectively, by Theorem \ref{thm:defretract}, so Theorem \ref{thm:htpyinv} allows us to work with $D^n(\Gamma)$ and $UD^n(\Gamma)$ to determine $TC(C^n(\Gamma))$ and $TC(UC^n(\Gamma)).$  The next theorem gives an upper bound for $TC(X)$ based on the dimension of $X:$

\begin{thm}{\rm\cite{FarberTC}}\qua \label{thm:upperbound}
Let $X$ be any CW complex.  Then, we have the upper bound $TC(X)\le 2\cdot\mathrm{dim}(X)+1.$
\end{thm}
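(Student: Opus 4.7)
The plan is to relate topological complexity to Lusternik--Schnirelmann category and then invoke the classical dimension bound on category for CW complexes. Recall that the Lusternik--Schnirelmann category $\mathrm{cat}(Y)$ of a space $Y$ is the smallest $k$ such that $Y$ admits an open cover by $k$ sets each of which is null-homotopic in $Y$.

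First I would show that $TC(X) \le \mathrm{cat}(X \times X)$. Suppose $U \subseteq X \times X$ is an open set whose inclusion $i: U \hookrightarrow X \times X$ is null-homotopic, and let $H: U \times [0,1] \to X \times X$ be a homotopy from $i$ to the constant map at some point $(x_0, x_0)$. Writing $H = (H_1, H_2)$ and given $(x,y) \in U$, the path $t \mapsto H_1(x,y,t)$ goes from $x$ to $x_0$ in $X$, while $t \mapsto H_2(x,y,t)$ goes from $y$ to $x_0$. Their concatenation (the first followed by the reverse of the second) is a path from $x$ to $y$ depending continuously on $(x,y)$, and this defines a continuous section of $p: P(X) \to X \times X$ over $U$. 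Consequently any categorical cover of $X \times X$ yields a motion planning algorithm of the same cardinality, proving $TC(X) \le \mathrm{cat}(X \times X)$.

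Second, I would invoke the product inequality $\mathrm{cat}(X \times Y) \le \mathrm{cat}(X) + \mathrm{cat}(Y) - 1$ for path-connected CW complexes, applied with $Y = X$ to obtain $\mathrm{cat}(X \times X) \le 2\,\mathrm{cat}(X) - 1$. The proof of this inequality combines a categorical cover $\{A_i\}_{i=1}^p$ of $X$ and $\{B_j\}_{j=1}^q$ of $Y$ into the $p+q-1$ sets $W_k = \bigcup_{i+j = k+1} A_i \times B_j$ (after a small shrinking/thickening argument so the pieces of each $W_k$ are disjoint), each of which is null-homotopic in $X \times Y$.

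Third, I would apply the classical bound $\mathrm{cat}(X) \le \dim(X) + 1$ for a CW complex $X$. This follows by constructing, on the $n$-skeleton, an open cover by $n+1$ sets, the $k$-th consisting of small open neighborhoods of the barycenters of the $k$-cells (taken pairwise disjoint within each dimension), which deformation retract to points and hence are contractible in $X$.

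Combining the three inequalities gives
\[
TC(X) \;\le\; \mathrm{cat}(X \times X) \;\le\; 2\,\mathrm{cat}(X) - 1 \;\le\; 2(\dim(X)+1) - 1 \;=\; 2\dim(X) + 1,
\]
as desired. The main obstacle is the product inequality for category: ensuring that the subsets $A_i \times B_j$ with $i+j$ fixed can be made disjoint (so that a null-homotopy on each piece glues to a null-homotopy on their union) requires a careful shrinking argument, typically handled by replacing the cover with a shrinking refinement whose closures are contained in the original sets. Everything else is either a direct construction (the section from a null-homotopy) or a standard cellular argument (the dimension bound on $\mathrm{cat}$).
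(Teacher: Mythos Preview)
The paper does not prove this statement; it is quoted with a citation to \cite{FarberTC} and used as a black box. Your argument is correct, but it differs from Farber's original route. Farber observes that $TC(X)$ is by definition the Schwarz genus (sectional category) of the free path fibration $p:P(X)\to X\times X$, and then appeals directly to the general inequality $\mathrm{secat}(p)\le \dim(B)+1$ for a fibration over a paracompact base $B$; with $B=X\times X$ this gives $TC(X)\le \dim(X\times X)+1=2\dim(X)+1$ in one stroke. Your proof factors through Lusternik--Schnirelmann category via $TC(X)\le\mathrm{cat}(X\times X)\le 2\,\mathrm{cat}(X)-1\le 2\dim(X)+1$, which is a perfectly valid and well-known chain of inequalities but requires the product formula for $\mathrm{cat}$ (with its attendant shrinking argument) as an intermediate step. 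Farber's approach is shorter because the bound $\mathrm{secat}(p)\le\dim(B)+1$ already subsumes both the inequality $TC(X)\le\mathrm{cat}(X\times X)$ (since $\mathrm{cat}(B)\le\dim(B)+1$ and $\mathrm{secat}\le\mathrm{cat}$ of the base) and the dimension estimate in a single statement about sectional category.
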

Finally, there is a cohomological lower bound for $TC(X).$  Before stating the theorem, we introduce some definitions.  Let $\mathbf{k}$ be a field, and consider the cup product
\[
\smile\co H^*(X;\mathbf{k})\otimes H^*(X;\mathbf{k})\to H^*(X;\mathbf{k}).
\]
Let $Z(X)\subset H^*(X;\mathbf{k})\otimes H^*(X;\mathbf{k})$ be the kernel of this homomorphism, called the ideal of \emph{zero-divisors} of $H^*(X;\mathbf{k}).$  The tensor product $H^*(X;\mathbf{k})\otimes H^*(X;\mathbf{k})$ has a multiplication given by $(\alpha\otimes\beta)(\alpha'\otimes\beta')=(-1)^{|\alpha'|\cdot|\beta|}\alpha\alpha'\otimes\beta\beta',$ where $|x|=j$ if $x\in H^j(X;\mathbf{k}).$  The \emph{zero-divisors-cup-length} of $H^*(X;\mathbf{k})$ is the largest $i$ such there are elements $a_1,\dots,a_i\in Z(X)$ with $a_1\cdots a_i\ne0.$ 

\begin{thm}{\rm\cite{FarberTC}}\qua \label{thm:lowerbound}
$TC(X)$ is greater than the zero-divisors-cup-length of $H^*(X;\mathbf{k}).$
\end{thm}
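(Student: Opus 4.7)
The plan is to recognize $TC(X)$ as the Schwarz (sectional) genus of the free path fibration $p\colon P(X)\to X\times X$, and then run the standard Schwarz--Ganea argument that cohomological obstructions to sections live on the base. The key geometric observation I would establish first is that whenever an open set $U\subset X\times X$ admits a continuous section $s\colon U\to P(X)$ of $p$, the inclusion $i\colon U\hookrightarrow X\times X$ is homotopic to a map that factors through the diagonal. Indeed, the formula
\[
H\colon U\times[0,1]\to X\times X,\qquad H_t(x,y)=(s(x,y)(t),y),
\]
defines a homotopy from $i$ at $t=0$ to the map $(x,y)\mapsto(y,y)$ at $t=1$, since $s(x,y)$ is a path from $x$ to $y$; continuity follows from continuity of $s$ and of the evaluation map $P(X)\times[0,1]\to X$ in the compact-open topology. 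Hence on cohomology $i^*=\pi_2^*\circ\Delta^*$, where $\Delta\colon X\to X\times X$ is the diagonal and $\pi_2\colon U\to X$ is the second projection.

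Next I would identify the zero-divisors ring-theoretically. Using field coefficients, the K\"unneth isomorphism $H^*(X\times X;\mathbf{k})\cong H^*(X;\mathbf{k})\otimes H^*(X;\mathbf{k})$ converts the cup product $\smile$ into $\Delta^*$, so $Z(X)=\ker\Delta^*$. From the previous step, any $z\in Z(X)$ satisfies $i^*(z)=\pi_2^*(\Delta^*z)=0$ on every open $U$ admitting a section. The long exact sequence of the pair $(X\times X,U)$ then provides a relative class $\tilde z\in H^*(X\times X,U;\mathbf{k})$ whose image in absolute cohomology is $z$.

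Finally, suppose $z_1,\dots,z_k\in Z(X)$ and that $TC(X)\le k$, so that $X\times X$ is covered by open sets $U_1,\dots,U_k$ each carrying a section of $p$. Choose relative lifts $\tilde z_j\in H^*(X\times X,U_j;\mathbf{k})$ as above. Their relative cup product lies in
\[
H^*\bigl(X\times X,\,U_1\cup\cdots\cup U_k;\mathbf{k}\bigr)=H^*(X\times X,\,X\times X;\mathbf{k})=0,
\]
and maps to $z_1\smile\cdots\smile z_k$ in absolute cohomology, which must therefore vanish. Contrapositively, the existence of zero-divisors $z_1,\dots,z_k$ with $z_1\cdots z_k\ne0$ forces $TC(X)>k$, establishing the asserted lower bound. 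The main technical obstacle is this final step: producing the relative lifts and forming their relative cup product requires either a good-pair hypothesis (automatic for the CW complexes $UD^n(\Gamma)$ relevant to this paper) or a slight neighborhood enlargement so that excision applies. One must also track signs through the K\"unneth identification to match the algebraic convention $(\alpha\otimes\beta)(\alpha'\otimes\beta')=(-1)^{|\alpha'||\beta|}\alpha\alpha'\otimes\beta\beta'$ recorded in the paper, so that an element of $\ker(\smile)$ in the tensor product genuinely corresponds to an element of $\ker\Delta^*$ on $X\times X$.
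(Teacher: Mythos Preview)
The paper does not supply a proof of this theorem: it is quoted from \cite{FarberTC} as background and used as a black box. Your argument is correct and is essentially Farber's original proof (which in turn is the Schwarz sectional-category argument specialized to the path fibration), so there is nothing to compare against here beyond noting that you have reproduced the standard proof.
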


Now, we establish the upper bounds in Theorem \ref{thm:Main}:
\begin{lemma}\label{lemma:MotionPlanning}
Let $k=\min\left\{\left\lfloor\frac{n}{2}\right\rfloor,m(\Gamma)\right\}.$  Then, $TC(UD^n(\Gamma))\le 2k+1.$ 
\end{lemma}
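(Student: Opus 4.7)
The plan is to chain together three results already stated: the deformation retraction of Theorem \ref{thm:kcells}, the homotopy invariance of $TC$ from Theorem \ref{thm:htpyinv}, and the dimensional upper bound of Theorem \ref{thm:upperbound}. Concretely, I would first invoke Theorem \ref{thm:kcells} to observe that $UD^n(\Gamma)$ deformation retracts onto the subcomplex $(UD^n(\Gamma))'_k$, which is obtained from the $k$-skeleton of $UD^n(\Gamma)$ by removing the redundant $k$-cells. In particular, this subcomplex is a CW complex of dimension at most $k$.

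Next, by homotopy invariance (Theorem \ref{thm:htpyinv}), we have
\[
TC(UD^n(\Gamma)) \;=\; TC\bigl((UD^n(\Gamma))'_k\bigr).
\]
Applying the upper bound of Theorem \ref{thm:upperbound} to the right-hand side yields
\[
TC\bigl((UD^n(\Gamma))'_k\bigr) \;\le\; 2\cdot\dim\bigl((UD^n(\Gamma))'_k\bigr)+1 \;\le\; 2k+1,
\]
which is the desired inequality.

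There is essentially no hard step here; the lemma is a formal consequence of tools assembled earlier. The only thing worth explicitly noting in a write-up is that the deformation retract of $UD^n(\Gamma)$ produced by discrete Morse theory really does have dimension at most $k = \min\{\lfloor n/2 \rfloor, m(\Gamma)\}$, which is precisely the content of the first part of Theorem \ref{thm:kcells} (every critical cell has dimension $\le k$, and the subcomplex $(UD^n(\Gamma))'_k$ onto which we retract lives inside the $k$-skeleton). The real work of the paper lies in the matching lower bound, which will require constructing many non-vanishing zero-divisor products via Theorem \ref{thm:lowerbound} and the cohomological structure from Theorem \ref{thm:cohomology}; by contrast, this upper-bound lemma is a clean one-line application of the homotopy-dimensional machinery.
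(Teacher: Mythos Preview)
Your argument is correct and complete. In fact, the paper explicitly acknowledges this route at the very start of its own proof, writing that the lemma ``is immediate from the second statement of Theorem~\ref{thm:kcells} and Theorems~\ref{thm:htpyinv} and~\ref{thm:upperbound}.'' So you have reproduced precisely the quick argument the author had in mind.

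The paper, however, deliberately goes further: rather than stopping at the abstract dimensional bound, it constructs a fairly explicit motion planning algorithm on $UD^n(\Gamma)$ with $2k+1$ local rules, in the spirit of Farber's algorithm in \cite{FarberCollisionFree}. The idea is to order the vertices of $\Gamma$, push each configuration to a fixed base configuration along geodesics, and then stratify $UD^n(\Gamma)\times UD^n(\Gamma)$ according to how many coordinates lie in the interiors of edges; continuity fails only across strata, and there are $2k+1$ of them. This explicit construction buys something your argument does not: it is reused in Corollary~\ref{cor:OrderedUB} to produce an algorithm for the \emph{ordered} space $D^n(\Gamma)$ by lifting paths through the covering $\pi:D^n(\Gamma)\to UD^n(\Gamma)$ and splicing in a discrete choice of path between fibers over the base configuration. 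The purely homotopy-theoretic argument you gave would also yield the bound for $D^n(\Gamma)$ (since $D^n(\Gamma)$ covers a $k$-dimensional complex and hence is homotopy equivalent to one), but the paper's approach has the virtue of being constructive.
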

\begin{proof}
This is immediate from the second statement of Theorem \ref{thm:kcells} and Theorems \ref{thm:htpyinv} and \ref{thm:upperbound}, but we will give a (fairly) explicit motion planning algorithm which realizes this upper bound.  This algorithm is similar to the one given by Farber in \cite{FarberCollisionFree}.  Let $a_1,\dots,a_n$ be the first $n$ vertices in a sufficiently subdivided tree $\Gamma$ (so $a_1=\ast$).  The fact that $\Gamma$ is sufficiently subdivided implies that only possibly $a_n$ is essential.  Consider a point $\mathbf{x}=\{x_1,\dots,x_n\}\in UD^n(\Gamma).$  If $x_i$ falls on a vertex $v$, let $f(x_i)$ be the number assigned to $v$ in the ordering above.  If $x_i$ falls on the interior of an edge $e,$ let $f(x_i)$ be the number assigned to $\iota(e).$   Since the order in which the $x_i$ appear in $\mathbf{x}$ is irrelevant, we may assume without loss of generality that $f(x_i)< f(x_{i+1})$ for all $i.$  Define a map $\sigma_{\mathbf{x}}\co[0,1]\to UD^n(\Gamma)$ as follows.  During the interval $\left[\frac{i-1}{n},\frac{i}{n}\right],\ \sigma_{\mathbf{x}}$ is the path which moves $x_i$ along the geodesic to $a_i$ at constant speed, and keeps all other $x_j$ fixed.  The choice of ordering of the vertices makes this a valid path in $UD^n(\Gamma)$ (i.e. there is at least a full open edge between any two components of $\sigma_{\mathbf{x}}(t)$ at any given time $t$).  Each $\sigma_{\mathbf{x}}$ is clearly continuous.  Define the section $s\co UD^n(\Gamma)\times UD^n(\Gamma)\to P(UD^n(\Gamma))$ by $s(\mathbf{x},\mathbf{y})=\sigma_{\mathbf{x}}\overline{\sigma_{\mathbf{y}}},$ the path $\sigma_{\mathbf{x}}$ followed by the reverse of $\sigma_{\mathbf{y}}.$  

This is not continuous on $UD^n(\Gamma)\times UD^n(\Gamma).$ If some $x_i$ (or $y_i$) falls on an endpoint $\tau(e)$ of some edge $e,$ a slight perturbation of $x_i$ (or $y_i$) may cause it to fall on the interior of $e,$ which will alter the numbering of the elements in $\mathbf{x}$  (or $\mathbf{y}$), which can lead to a very different path if $\iota(e)$ is essential.  So, we wish to examine the sets on which $s$ is continuous.  For a collection $E$ of edges, let $S_E$ be the set of points $\mathbf{x}=\{x_1,\dots,x_n\}\in UD^n(\Gamma)$ with the property that the interior of each edge $e\in E$ contains (exactly) one $x_i$ and no $x_j$ falls on the interior of any edge not in $E$ (so such an $x_j$ falls on a vertex which is not the endpoint of any $e\in E$).  The function $s$ is continuous on each $S_E\times S_{E'}.$  Now, let 
\[
S_i=\bigcup_{|E|=i}S_E.
\]
If $|E|=|E'|$ and $E\ne E',$ then a sequence of points in $S_{E}$ cannot converge to a point in $S_{E'},$ so that $\overline{S_E}\cap S_{E'}=\emptyset,$ and similarly $S_E\cap \overline{S_{E'}}=\emptyset.$  In other words, $S_i$ is a topologically disjoint union of the sets $S_E,$ and then for each fixed $i$ and $j,$ the set $S_i\times S_j$ is a topologically disjoint union of sets on which $s$ is continuous, so $s$ is continuous on $S_i\times S_j.$  Now, a sequence of points in $S_i$ may converge to a point in $S_{i'}$ for some $i'<i,$ but no sequence of points in $S_{i}$ can converge to a point in $S_{i'}$ if $i'>i,$ so the sets 
\[
U_l=\bigcup_{i+j=l}S_i\times S_j
\]
are again topologically disjoint unions of sets on which $s$ is continuous, so $s$ is continuous on $U_l.$  

The sets $U_0,\dots,U_{2k}$ cover $(UD^n(\Gamma))'_k\times(UD^n(\Gamma))'_k,$ since at most $k$ points can fall on the interior of an edge in either factor (see Theorem \ref{thm:kcells}).  They are not necessarily open, but each $U_l$ can be replaced with an open set $U'_l$ which allows each $x_i$ which falls on a vertex $v$ (and $x_i$ appears in a point $\mathbf{x}$ in the first component of $U_l$) to vary slightly away from $v$ (while keeping the point $\mathbf{x}$ in $UD^n(\Gamma)$), and defining $\sigma_{\mathbf{x},l}$ which is as above, except each of these $x_i$ is given the number for $v.$  This is well-defined, since each $v$ does not fall on any of the $l$ edges whose interiors are occupied by some $x_j$ in $\mathbf{x}$, so if $x_i$ falls on $v,$ a small perturbation of $x_i$ will not cause it to fall on the interior of any of those $l$ edges.  Similar modifications are made in the second component.  Define $s'_l\co U'_l\to P(UD^n(\Gamma))$ by  
\[
s'_l(\mathbf{x},\mathbf{y})=\sigma_{\mathbf{x},l}\overline{\sigma}_{\mathbf{y},l}
\]
This is continuous.  

If the map $H\co[0,1]\times UD^n(\Gamma)\to UD^n(\Gamma)$ is a deformation retraction from $UD^n(\Gamma)$ to $(UD^n(\Gamma))'_k$ with $H(0,-)$ equaling the identity map, then, $H_{\mathbf{x}}:=H(-,\mathbf{x})$ is a path from a point $\mathbf{x}\in UD^n(\Gamma)$ to some point in $(UD^n(\Gamma))'_k,$ which varies continuously with $\mathbf{x}.$  If $V_l=(H(1,-)\times H(1,-))^{-1}(U'_l),$ then $\{V_0,\dots,V_{2k}\}$ is an open cover of $UD^n(\Gamma)\times UD^n(\Gamma),$ and the section $s_l\co V_l\to P(UD^n(\Gamma))$ given by 
\[
s_l(\mathbf{x},\mathbf{y})=H_{\mathbf{x}}s'_l(\mathbf{x_1},\mathbf{y_1})\overline{H}_{\mathbf{y}},
\]
where $\mathbf{x_1}=H_\mathbf{x}(1)$ and $\mathbf{y_1}=H_\mathbf{y}(1),$ is continuous on each $V_l,$ for $l=0,\dots,2k,$ completing the proof.
\end{proof}

An additional step can be added to this algorithm to give the same upper bound for the ordered configuration spaces.  Again, this approach is essentially the one described in \cite{FarberCollisionFree}.
\begin{cor}\label{cor:OrderedUB}
Let $\Gamma$ be a tree with $m(\Gamma)\ge1,$ and let $k=\min\left\{\left\lfloor\frac{n}{2}\right\rfloor,m(\Gamma)\right\}.$  Then, $TC(D^n(\Gamma))\le 2k+1.$
\end{cor}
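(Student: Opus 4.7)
The plan is to upgrade the motion planning algorithm of Lemma \ref{lemma:MotionPlanning} to the ordered setting by lifting it through the covering $p\colon D^n(\Gamma)\to UD^n(\Gamma)$ and inserting a ``label-permuting'' correction path at the canonical midpoint configuration. Let $V_0,\dots,V_{2k}$ and $s_l\colon V_l\to P(UD^n(\Gamma))$ be the open cover and sections produced by Lemma \ref{lemma:MotionPlanning}. The open sets $\tilde V_l:=(p\times p)^{-1}(V_l)$ then cover $D^n(\Gamma)\times D^n(\Gamma)$, and for each $(\mathbf{x},\mathbf{y})\in\tilde V_l$ the path $s_l(p(\mathbf{x}),p(\mathbf{y}))$ decomposes at its midpoint as $\sigma_{p(\mathbf{x}),l}\cdot\overline{\sigma_{p(\mathbf{y}),l}}$, with midpoint equal to the canonical unordered configuration $\{a_1,\dots,a_n\}$. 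Each half lifts uniquely through $p$ to give labeled paths $\tilde\sigma_{\mathbf{x},l}$ starting at $\mathbf{x}$ and $\tilde\sigma_{\mathbf{y},l}$ starting at $\mathbf{y}$; these terminate at labeled canonical configurations $\rho_{\mathbf{x}}\cdot(a_1,\dots,a_n)$ and $\rho_{\mathbf{y}}\cdot(a_1,\dots,a_n)$ for unique permutations $\rho_{\mathbf{x}},\rho_{\mathbf{y}}\in S_n$ recording which label arrives at which $a_i$.

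The key observation is that $(\rho_{\mathbf{x}},\rho_{\mathbf{y}})$ is locally constant on $\tilde V_l$: the unordered algorithm is constant on each pair of cell-product pieces $\tilde S_E\times\tilde S_{E'}$, and within such a piece each labeled point stays on its specified cell (or its perturbation is read off as the associated vertex, as in the $U'_l$-enlargement), so the ordering of labels along the canonical sequence cannot change. Since $m(\Gamma)\ge 1$, the tree $\Gamma$ is not homeomorphic to a closed interval, so Theorem \ref{thm:connectivity} (together with the remark that it transfers to the discrete spaces once $\Gamma$ is sufficiently subdivided) guarantees that $D^n(\Gamma)$ is path-connected. For each ordered pair $(\rho,\rho')\in S_n\times S_n$ we therefore fix once and for all a continuous path $\beta_{\rho,\rho'}$ in $D^n(\Gamma)$ from $\rho\cdot(a_1,\dots,a_n)$ to $\rho'\cdot(a_1,\dots,a_n)$, and define
\[
\tilde s_l(\mathbf{x},\mathbf{y})=\tilde\sigma_{\mathbf{x},l}\cdot\beta_{\rho_{\mathbf{x}},\rho_{\mathbf{y}}}\cdot\overline{\tilde\sigma_{\mathbf{y},l}}.
\]
This is a path in $D^n(\Gamma)$ from $\mathbf{x}$ to $\mathbf{y}$; on each of the finitely many clopen pieces of $\tilde V_l$ on which $(\rho_{\mathbf{x}},\rho_{\mathbf{y}})$ is constant, the middle factor is a fixed path while the outer two lifts vary continuously with $(\mathbf{x},\mathbf{y})$, so $\tilde s_l$ is continuous. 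The $2k+1$ sets $\tilde V_l$ with their sections $\tilde s_l$ then realize the bound.

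The main obstacle I expect is the bookkeeping needed to carry the perturbation enlargements of Lemma \ref{lemma:MotionPlanning} through to the ordered setting: the sets $V_l$ were built by first enlarging $U_l$ to $U'_l$ to accommodate slight motion away from vertices and then pulling back through the deformation retraction $H$ to the $k$-skeleton with redundant $k$-cells removed. To guarantee that the labeled lifts $\tilde\sigma_{\mathbf{x},l}$ exist and vary continuously on all of $\tilde V_l$, one must check that both the perturbation convention and the retraction $H$ can be chosen $S_n$-equivariantly (so that they lift to $D^n(\Gamma)$), and that after these enlargements the assignment $\mathbf{x}\mapsto \rho_{\mathbf{x}}$ remains locally constant. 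Once that technical point is settled, the three-stage concatenation above gives the required continuous sections and establishes $TC(D^n(\Gamma))\le 2k+1$.
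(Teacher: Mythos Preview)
Your approach is essentially the same as the paper's: lift each half of the unordered motion-planning path through the covering $\pi\colon D^n(\Gamma)\to UD^n(\Gamma)$, then splice in a fixed path between the two resulting points of the finite fiber $\pi^{-1}(\mathbf{a})$. The paper's write-up is slightly cleaner in one respect: rather than arguing that $(\rho_{\mathbf{x}},\rho_{\mathbf{y}})$ is locally constant, it simply observes that the connecting map $r\colon \pi^{-1}(\mathbf{a})\times\pi^{-1}(\mathbf{a})\to P(D^n(\Gamma))$ has discrete domain and is therefore automatically continuous, so the composite $\widetilde{\mathbf{x}}\mapsto \widetilde\sigma_{\widetilde{\mathbf{x}},l}(1)\mapsto r(\cdot,\cdot)$ is continuous without further work.

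Your ``main obstacle'' is a non-issue: you do \emph{not} need $H$ or the perturbation convention to be $S_n$-equivariant. All that is used is the unique path-lifting property of the covering $\pi$, which lifts the entire unordered path $H_{p(\widetilde{\mathbf{x}})}\sigma_{p(\widetilde{\mathbf{x}}),l}$ (not just the $\sigma$-portion) to a path in $D^n(\Gamma)$ starting at $\widetilde{\mathbf{x}}$, and this lift depends continuously on $\widetilde{\mathbf{x}}$ by standard covering-space theory. No lift of $H$ itself to $D^n(\Gamma)$ is ever required.
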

\begin{proof}
Let $a_1,\dots,a_n$ be the first $n$ vertices of $\Gamma,$ and let $\mathbf{a}=\{a_1,\dots,a_n\}\in UD^n(\Gamma).$  For $l=0,\dots,2k,$ let $V_l,\ \sigma_{\mathbf{x},l}$ and $H_{\mathbf{x}}$ be as in the proof of Lemma \ref{lemma:MotionPlanning}.  Let $p_1\co UD^n(\Gamma)\times UD^n(\Gamma)\to UD^n(\Gamma)$ be the projection of the first component, and let $V_l^1=p_1(V_l),$ so that if $\mathbf{x}\in V_l^1,$ then $H_{\mathbf{x}}\sigma_{\mathbf{x_1},l}$ (with $\mathbf{x_1}=H_\mathbf{x}(1)$) is a path from $\mathbf{x}$ to $\mathbf{a},$ and this path varies continuously as $\mathbf{x}$ varies in $V_l^1.$   If $\pi$ is the covering space projection $\pi\co D^n(\Gamma)\to UD^n(\Gamma),$  and $\widetilde{\mathbf{x}}\in \pi^{-1}(\mathbf{x})\subset D^n(\Gamma),$ with $\mathbf{x}\in V_l^1,$ let $\widetilde{\sigma}_{\widetilde{\mathbf{x}},l}$ be the unique lift of $H_\mathbf{x}\sigma_{\mathbf{x_1},l}$ which satisfies $\widetilde{\sigma}_{\widetilde{\mathbf{x}},l}(0)=\widetilde{\mathbf{x}}.$  This is a path from $\widetilde{\mathbf{x}}$ to some point $\widetilde{\mathbf{a}}\in\pi^{-1}(\mathbf{a})$ which varies continuously as $\widetilde{\mathbf{x}}$ varies in $\widetilde{V}_l^1=\pi^{-1}(V_l^1).$  Define $\widetilde{\sigma}_{\widetilde{\mathbf{y}},l}$ similarly if $\widetilde{\mathbf{y}}\in\pi^{-1}(\mathbf{y})$ for some $\mathbf{y}$ in the second component of $V_l.$

Now, given $\widetilde{\mathbf{a}},\widetilde{\mathbf{a}}'\in\pi^{-1}(\mathbf{a}),$ let $\rho_{\widetilde{\mathbf{a}},\widetilde{\mathbf{a}}'}$ be any path from $\widetilde{\mathbf{a}}$ to $\widetilde{\mathbf{a}}'$.   Such a path exists by Theorem \ref{thm:connectivity} since $\Gamma$ has at least one essential vertex.  The function 
\[
r\co \pi^{-1}(\mathbf{a})\times\pi^{-1}(\mathbf{a})\to P(D^n(\Gamma))
\]
 given by $r(\widetilde{\mathbf{a}},\widetilde{\mathbf{a}}')=\rho_{\widetilde{\mathbf{a}},\widetilde{\mathbf{a}}'}$ is continuous since the domain is a discrete space.  For each $l=0,\dots,2k,$ let $\widetilde{V}_l=(\pi\times\pi)^{-1}(V_l),$ and define the section $\widetilde{s}_l\co \widetilde{V}_l\to P(D^n(\Gamma))$ by 
\[
\widetilde{s}_l(\widetilde{\mathbf{x}},\widetilde{\mathbf{y}})=\widetilde{\sigma}_{\widetilde{\mathbf{x}},l}\,r(\widetilde{\sigma}_{\widetilde{\mathbf{x}},l}(1),\widetilde{\sigma}_{\widetilde{\mathbf{y}},l}(1))\,\overline{\widetilde{\sigma}_{\widetilde{\mathbf{y}},l}}.
\]
This is continuous on $\widetilde{V}_l,$ and $\{\widetilde{V}_0,\dots,\widetilde{V}_{2k}\}$ is an open cover of $D^n(\Gamma)\times D^n(\Gamma),$ completing the proof.
\end{proof}

In establishing the lower bound, the following will be useful:

\begin{lemma}\label{lemma:products}
Suppose $m(\Gamma)\ge1,\ UD^n(\Gamma)$ has the homotopy type of a $k$-dimensional CW complex, and $\Phi$ and $\Psi$ are critical $k$-cells of $UD^n(\Gamma).$  If $\{[c_1],\dots,[c_k]\}$ and $\{[d_1],\dots,[d_k]\}$ are the unique collections of equivalence classes of 1-cells having least upper bounds $[\Phi]$ and $[\Psi],$ respectively, and for all $i$ and $j$ we have $[c_i]\ne[d_j],$ then $TC(UD^n(\Gamma))$ and $TC(D^n(\Gamma))$ are greater than $2k.$
\end{lemma}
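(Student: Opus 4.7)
The plan is to apply the cohomological lower bound of Theorem~\ref{thm:lowerbound} with rational coefficients: I will exhibit $2k$ zero-divisors in $H^*(UD^n(\Gamma);\Q)\otimes H^*(UD^n(\Gamma);\Q)$ whose product is nonzero, which forces $TC(UD^n(\Gamma))>2k$, and then transport this bound to the ordered space via the covering projection $\pi:D^n(\Gamma)\to UD^n(\Gamma)$.

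Since each $\phi_{[c_i]}$ and $\phi_{[d_j]}$ lives in $H^1(UD^n(\Gamma);\Q)$, the elements
\[
\bar a_i:=1\otimes\phi_{[c_i]}-\phi_{[c_i]}\otimes 1,\qquad \bar b_j:=1\otimes\phi_{[d_j]}-\phi_{[d_j]}\otimes 1
\]
are zero-divisors. I would form
\[
\Omega:=\bar a_1\cdots\bar a_k\,\bar b_1\cdots\bar b_k,
\]
expand it as a sum of $2^{2k}$ signed monomials of the form $\pm L\otimes R$, where $L$ and $R$ are cup products of disjoint sub-collections of the $\phi_{[c_i]}$'s and $\phi_{[d_j]}$'s, and show that in the basis of $H^*\otimes H^*$ furnished by Theorem~\ref{thm:cohomology}(1), the coefficient of $\phi_{[\Phi]}\otimes\phi_{[\Psi]}$ equals $\pm 1$. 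Because $UD^n(\Gamma)$ has the homotopy type of a $k$-dimensional CW complex, $H^j=0$ for $j>k$, and hence only monomials in bidegree $(k,k)$ can contribute to this coefficient. By Theorem~\ref{thm:cohomology}(3) together with the uniqueness clause of Lemma~\ref{lemma:partialorder}, a cup product of $k$ critical $1$-cocycles equals $\phi_{[\Phi]}$ precisely when the underlying collection of $1$-cell equivalence classes is $\{[c_1],\dots,[c_k]\}$, and equals $\phi_{[\Psi]}$ precisely when it is $\{[d_1],\dots,[d_k]\}$. Combined with the disjointness hypothesis $[c_i]\ne[d_j]$, this forces the unique contributing monomial to be the one obtained by selecting $-\phi_{[c_i]}\otimes 1$ from every $\bar a_i$ and $1\otimes\phi_{[d_j]}$ from every $\bar b_j$; a direct Koszul-sign computation identifies this monomial as $\pm\phi_{[\Phi]}\otimes\phi_{[\Psi]}$. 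Thus $\Omega\ne 0$ and Theorem~\ref{thm:lowerbound} yields $TC(UD^n(\Gamma))>2k$.

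For the ordered case, $\pi:D^n(\Gamma)\to UD^n(\Gamma)$ is a finite regular $n!$-sheeted covering, so the associated transfer makes $\pi^*$ injective on rational cohomology. Consequently $\pi^*\otimes\pi^*$ is an injective ring homomorphism carrying each $\bar a_i$ and $\bar b_j$ to a zero-divisor, hence it sends $\Omega$ to a nonzero product of $2k$ zero-divisors in $H^*(D^n(\Gamma);\Q)\otimes H^*(D^n(\Gamma);\Q)$, yielding $TC(D^n(\Gamma))>2k$ as well. The main obstacle I anticipate is the combinatorial verification that no other monomial in the expansion of $\Omega$ contributes to the basis coefficient of $\phi_{[\Phi]}\otimes\phi_{[\Psi]}$; this is precisely where the disjointness of the $[c_i]$'s from the $[d_j]$'s, combined with the unique-least-upper-bound statement of Lemma~\ref{lemma:partialorder}, does the essential work.
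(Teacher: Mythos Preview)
Your proposal is correct and follows essentially the same route as the paper: form the $2k$ zero-divisors $\phi_{[c_i]}\otimes 1-1\otimes\phi_{[c_i]}$ and $\phi_{[d_j]}\otimes 1-1\otimes\phi_{[d_j]}$, use $k$-dimensionality to restrict attention to bidegree $(k,k)$, and invoke the uniqueness clause of Lemma~\ref{lemma:partialorder} together with the disjointness hypothesis to isolate the coefficient of $\phi_{[\Phi]}\otimes\phi_{[\Psi]}$; the passage to $D^n(\Gamma)$ via injectivity of $\pi^*$ on rational cohomology is likewise identical. The paper phrases the key step slightly differently---it argues that any mixed monomial $\alpha$ in the $\gamma$'s and $\delta$'s is either zero or equals $\phi_{[\omega]}$ for the least upper bound $[\omega]$ of its underlying collection, which by uniqueness cannot be $[\Phi]$ or $[\Psi]$---but this is exactly the verification you anticipated as the main obstacle.
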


\begin{proof}
If necessary, rearrange the equivalence classes in $\{[c_1],\dots,[c_k]\}$ so that $\iota(e_i)<\iota(e_{i+1}),$ as above, and arrange the classes in $\{[d_1],\dots,[d_k]\}$ similarly.  Consider the zero divisors 
\[
\overline{\phi}_{[c_i]}=\phi_{[c_i]}\otimes1-1\otimes\phi_{[c_i]},\ \ \ \ \ \overline{\phi}_{[d_j]}=\phi_{[d_j]}\otimes1-1\otimes\phi_{[d_j]}
\]
in 
$Z(UD^n(\Gamma))\subset H^*(UD^n(\Gamma);\Q)\otimes H^*(UD^n(\Gamma);\Q)$ and their product
\begin{align}
&\biggl(\prod_{i=1}^k\overline{\phi}_{[c_i]}\biggr)\biggl(\prod_{j=1}^k\overline{\phi}_{[d_j]}\biggr)\label{ZeroDivisors}\\
&=\pm(\phi_{[c_1]}\cdots\phi_{[c_k]}\otimes\phi_{[d_1]}\cdots\phi_{[d_k]})\pm(\phi_{[d_1]}\cdots\phi_{[d_k]}\otimes\phi_{[c_1]}\cdots\phi_{[c_k]})\nonumber\\
&\hspace{10mm} +\text{\emph{other terms}}\nonumber\\
&=\pm\phi_{[\Phi]}\otimes\phi_{[\Psi]}\pm\phi_{[\Psi]}\otimes\phi_{[\Phi]}+\text{\emph{other terms.}}\label{ZeroDivisorsSum}
\end{align}

Since $UD^n(\Gamma)$ has the homotopy type of a $k$-dimensional complex, all products of more than $k$ 1-dimensional classes in $H^*(UD^n( \Gamma))$ are zero, so any non-zero term in \emph{other terms} must be of the form $\alpha\otimes\beta$ where $\alpha$ and $\beta$ are both degree-$k$ monomials in $\phi_{[c_1]},\dots,\phi_{[c_k]},\phi_{[d_1]},\dots,\phi_{[d_k]}.$  For a collection $I=\{[c_{i_1}],\dots,[c_{i_p}]\}$, we denote $\phi_{[c_{i_1}]}\cdots\phi_{[c_{i_p}]}$ by $\phi_I$ and denote the set $\{[c_i]:[c_i]\notin I\}$ by $\overline{I},$ and use analogous definitions for a collection $J=\{[d_{j_1}],\dots,[d_{j_r}]\}.$  Then, we can write $\alpha\otimes\beta=\phi_I\phi_J\otimes\phi_{\overline{I}}\phi_{\overline{J}},$ for some collections $I$ and $J$, with $I\ne\emptyset$ and $J\ne\emptyset$ (so that neither $I\cup J$ nor $\overline{I}\cup\overline{J}$ is either $\{[c_1],\dots,[c_k]\}$ or $\{[d_1],\dots,[d_k]\},$ by assumption), and $|I|+|J|=k.$

If either $I\cup J$ or $\overline{I}\cup\overline{J}$ does not have an upper bound, then $\alpha\otimes\beta=0.$  If both $I\cup J$ and $\overline{I}\cup\overline{J}$ have least upper bounds $[\omega]$ and $[\omega'],$ respectively, which both contain critical cells, then $\alpha\otimes\beta=\phi_{[\omega]}\otimes\phi_{[\omega']}$ is a basis element.  Since the collections of equivalence classes of 1-cells which have $[\Phi]$ and $[\Psi]$ as their least upper bounds are unique, and $I\cup J$ is neither $\{[c_1],\dots,[c_k]\}$ nor $\{[d_1],\dots,[d_k]\},$ we have $[\omega]\notin\{[\Phi],[\Psi]\}$ and similarly, $[\omega']\notin\{[\Phi],[\Psi]\},$ so $\alpha\otimes\beta\notin\{\phi_{[\Phi]}\otimes\phi_{[\Psi]},\phi_{[\Psi]}\otimes\phi_{[\Phi]}\}.$

If both $I\cup J$ and $\overline{I}\cup\overline{J}$ have upper least bounds $[\omega]$ and $[\omega'],$ but exactly one, say $[\omega]$, contains a critical cell, then $\alpha=\phi_{[\omega]},$ a basis element which is equal to neither $\phi_{[\Phi]}$ nor $\phi_{[\Psi]},$ as above.  We use the procedure following Theorem \ref{thm:Kernel} to rewrite the cocycle $\phi_{[\omega']}$ in terms of critical cocycles, arriving at $\alpha\otimes\beta=\phi_{[\omega]}\otimes \Sigma',$ where $\Sigma'$ is a linear combination of critical cocycles.  Note that since each edge $e$ in $[\omega']$ is an order-disrespecting edge in either $\Phi$ or $\Psi,$ the endpoint $\iota(e)$ must be essential, so this does not violate our assumption that $\tau(e)$ has degree less than 3 (we may need to further subdivide $\Gamma$ if $n=2$).  If $\Sigma'$ is non-zero, then $\alpha\otimes\beta$ can be written as a linear combination of basis elements of the form $\phi_{[\omega]}\otimes\phi_{[\widetilde{\omega}']},$ none of which are $\phi_{[\Phi]}\otimes\phi_{[\Psi]}$ or $\phi_{[\Psi]}\otimes\phi_{[\Phi]}.$  Similar statements hold if $[\omega']$ contains a critical cell.

Finally, suppose both $I\cup J$ and $\overline{I}\cup\overline{J}$ have upper least bounds $[\omega]$ and $[\omega'],$ but neither contains a critical cell.  We may write $\phi_{[\omega]}$ and $\phi_{[\omega']}$ as linear combinations of critical cocycles, as above:
\[
 \phi_{[\omega]}=\Sigma\hspace{1cm}
 \phi_{[\omega']}=\Sigma'.
\]
Since $[\omega]$ does not contain a critical cell, it must contain some bad edge $e.$  There is an equivalence class in $I\cup J$ which contains the edge $e,$ and this class is unique since $I\cup J$ has an upper bound (see Lemma \ref{lemma:partialorder}).  Suppose first that $[c_i]\in I$ is this class.   Consider a critical cocycle $\phi_{[\widetilde{\omega}]}$ appearing in $\Sigma,$ and let $\{[\widetilde{\omega}_1],\dots,[\widetilde{\omega}_k]\}$ be the unique collection of equivalence classes of 1-cells which has $[\widetilde{\omega}]$ as its least upper bound.  As in the discussion following Theorem \ref{thm:Kernel}, there must be some $s$ such that the unique edge $e'$ in $[\widetilde{\omega}_{s}]$ satisfies $\iota(e')=\iota(e),$ and either
\begin{equation}
f_{[c_i]}(C_0)=f_{[\widetilde{\omega}_s]}(C_0)\text{ and }\tau(e)<\tau(e'),\quad\text{or}\quad f_{[c_i]}(C_0)>f_{[\widetilde{\omega}_s]}(C_0).
\label{PhiNotInSigma}
\end{equation}
But, in either case, we have $[c_i]\ne[\widetilde{\omega}_s].$  Since $[c_i]$ is the only class in $\{[c_1],\dots,[c_k]\}$ which contains an edge whose initial point is $\iota(e),$ we have 
\[
\{[\widetilde{\omega}_1],\dots,[\widetilde{\omega}_k]\}\ne \{[c_1],\dots,[c_k]\},
\]
 so that, as above, $[\widetilde{\omega}]\ne[\Phi].$  Therefore the cocycle $\phi_{[\Phi]}$ does not appear in $\Sigma$ (by this, we mean that $\phi_{[\Phi]}$ does not appear with a non-zero coefficient in $\Sigma$).  
 
 Now, it is possible that $\phi_{[\widetilde{\omega}]}=\phi_{[\Psi]}.$  If this is the case, we have 
\[
\{[\widetilde{\omega}_1],\dots,[\widetilde{\omega}_k]\}=\{[d_1],\dots,[d_k]\},
\] 
so there must be some $j$ such that $[d_j]=[\widetilde{\omega}_s].$  Then, from (\ref{PhiNotInSigma}), we have either 
\begin{equation}
f_{[c_i]}(C_0)=f_{[d_j]}(C_0)\text{ and }\tau(e)<\tau(e'),\quad\text{or}\quad f_{[c_i]}(C_0)>f_{[d_j]}(C_0).
\label{PsiInSigma}
\end{equation}

We wish to show that in this case, $\phi_{[\Phi]}$ cannot appear in $\Sigma'.$  The edge $e$ is in the class $[c_i]$ and the edge $e'$ is in the class $[d_j]=[\widetilde{\omega}_s],$ and since $[c_i]\in I\cup J$ and $I\cup J$ contains an upper bound, it must be the case that $[d_j]\notin J.$  So, we have $[d_j]\in \overline{J},$ and therefore the edge $e'$ appears in $[\omega'].$  Let $\phi_{[\widetilde{\omega}']}$ be a critical cocycle appearing in $\Sigma',$ and let $\{[\widetilde{\omega}'_1],\dots,[\widetilde{\omega}'_k]\}$ be the unique collection of equivalence classes of 1-cells which has $[\widetilde{\omega}']$ as its least upper bound.  Because the edge $e'$ appears in $[\omega'],$ there must be some $t$ such that if $e''$ is the unique edge in $[\widetilde{\omega}'_t],$ then $\iota(e'')=\iota(e').$  Since the edge $e'$ may or may not be bad in $[\omega'],$ we have either 
\begin{equation}
f_{[d_j]}(C_0)=f_{[\widetilde{\omega}'_{t}]}(C_0)\text{ and }\tau(e')\le\tau(e''),\quad\text{or}\quad f_{[d_j]}(C_0)>f_{[\widetilde{\omega}'_{t}]}(C_0).
\label{PsiInSigma2}
\end{equation}
For the sake of contradiction, suppose that $\phi_{[\widetilde{\omega}']}=\phi_{[\Phi]}.$  Then, we have 
\[
\{[\widetilde{\omega}'_1],\dots,[\widetilde{\omega}'_k]\}=\{[c_1],\dots,[c_k]\},
\]
 so that $[\widetilde{\omega}'_t]=[c_{i_0}]$ for some $i_0.$ In particular, $e''$ is the unique edge in $[c_{i_0}],$ but since $\iota(e)=\iota(e')=\iota(e''),$ we must have $e''=e$ and $i_0=i.$ From (\ref{PsiInSigma2}), we have either
\[
f_{[d_j]}(C_0)=f_{[c_i]}(C_0)\text{ and }\tau(e')\le\tau(e),\quad\text{or}\quad f_{[d_j]}(C_0)>f_{[c_i]}(C_0),
\]
but, this contradicts (\ref{PsiInSigma}).  

So, we have shown that if $[c_i]$ is the class in $I\cup J$ which contains the bad edge $e,$ then it is impossible that $\phi_{[\Phi]}$ appears in $\Sigma,$ and if $\phi_{[\Psi]}$ appears in $\Sigma,$ it is impossible that $\phi_{[\Phi]}$ appears in $\Sigma'.$ By a symmetric argument, if $[d_{j'}]$ is the class in $I\cup J$ which contains $e,$ it follows that it is impossible that $\phi_{[\Psi]}$ appears in $\Sigma,$ and if $\phi_{[\Phi]}$ appears in $\Sigma,$ it is impossible that $\phi_{[\Psi]}$ appears in $\Sigma'.$  So, in either case, neither $\phi_{[\Phi]}\otimes\phi_{[\Psi]}$ nor $\phi_{[\Psi]}\otimes\phi_{[\Phi]}$ appears in the expansion of $\Sigma\otimes\Sigma'$ as a linear combination of simple tensors of critical cocycles.

Therefore (\ref{ZeroDivisorsSum}) may be written as a linear combination of basis elements of the tensor product $H^*(UD^n(\Gamma))~\otimes~H^*(UD^n(\Gamma)),$ with neither $\phi_{[\Phi]}\otimes\phi_{[\Psi]}$ nor $\phi_{[\Psi]}\otimes\phi_{[\Phi]}$ appearing in \emph{other terms}, and since $\Phi$ and $\Psi$ are distinct critical cells, 
\[
\pm\phi_{[\Phi]}\otimes\phi_{[\Psi]}\pm\phi_{[\Psi]}\otimes\phi_{[\Phi]}\ne0,
\]
so the entire sum is non-zero.   So, (\ref{ZeroDivisors}) is a nonzero product of $2k$ zero-divisors, and Theorem \ref{thm:lowerbound} establishes that $TC(UD^n(\Gamma))\ge2k+1,$ as desired.

The statement for $TC(D^n(\Gamma))$ follows from the fact that the map 
\[
\pi^*\co H^*(UD^n(\Gamma))\to H^*(D^n(\Gamma))
\]
induced by the covering space projection $\pi\co D^n(\Gamma)\to UD^n(\Gamma)$ is injective (see \cite[Proposition 3G.1]{Hatcher}, which states that any $N$-sheeted covering space projection given by a group action induces an injection in cohomology with coefficients in a field of characteristic 0).
\end{proof}

Note that the condition $[c_i]\ne[d_j]$ for all $i$ and $j$ is necessary for (\ref{ZeroDivisors}) to be non-zero.  If $[c_i]=[d_j]$ for some $i$ and $j,$ then $\phi_{[c_i]}=\phi_{[d_j]},$ so (\ref{ZeroDivisors}) contains the product 
\begin{align*}
\overline{\phi}_{[c_i]}\overline{\phi}_{[c_i]}&=(\phi_{[c_i]}\otimes1-1\otimes\phi_{[c_i]})(\phi_{[c_i]}\otimes1-1\otimes\phi_{[c_i]})\\
&=(\phi_{[c_i]}\otimes1)^2-(\phi_{[c_i]}\otimes1)(1\otimes\phi_{[c_i]})-(1\otimes\phi_{[c_i]})(\phi_{[c_i]}\otimes1)+(1\otimes\phi_{[c_i]})^2\\
&=\phi_{[c_i]}^2\otimes1-(-1)^0(\phi_{[c_i]}\otimes\phi_{[c_i]})-(-1)^1(\phi_{[c_i]}\otimes\phi_{[c_i]})+1\otimes\phi_{[c_i]}^2\\
&=0.
\end{align*}
Therefore, (\ref{ZeroDivisors}) is non-zero if and only if $[c_i]\ne[d_j]$ for any $i$ and $j.$  

We now prove Theorem \ref{thm:Main}.
\begin{proof} [Proof of Theorem \ref{thm:Main}]

Assume without loss of generality that $\Gamma$ is sufficiently subdivided for $n,$ so that the topological and discrete configuration spaces have the same homotopy type, and hence, the same topological complexity.  We will prove the results for the discrete configuration spaces.  The upper bounds, $TC(UD^n(\Gamma))\le2m+1,$ $TC(D^n(\Gamma))\le2m+1$ in statement \ref{nLarge} and $TC(UD^n(\Gamma))\le2q+1,$ $TC(D^n(\Gamma))\le2q+1,$ in statement \ref{nNotLarge} are given in Lemma \ref{lemma:MotionPlanning} and Corollary \ref{cor:OrderedUB}.  To establish the lower bounds, we will construct two $m$-cells for statement \ref{nLarge} and two $q$-cells for statement \ref{nNotLarge} that satisfy the conditions of Lemma \ref{lemma:products}.  

For statement \ref{nLarge}, first note that Theorem \ref{thm:kcells} implies $UD^n(\Gamma)$ is homotopic to an $m$-dimensional CW complex.  Let $v_1,\dots,v_m$ be (in order) the essential vertices of $\Gamma$ and let $A_1,\dots,A_k$ be a collection of oriented arcs which is allowable for the set of vertices of degree 3.  For each arc $A_l,$ let $a_l$ and $b_l$ be the initial and terminal points of $A_l,$ with respect to the orientation of $A_l.$  The endpoints need not be vertices of $\Gamma.$

We will construct an $m$-cell $\Phi$ as follows.  At each essential vertex $v_i,$ let $e_i$ be the edge in direction 2 from $v_i$ (so that $ \iota(e_i)=v_i),$ let $f_i$ be the edge in direction 1 from $v_i,$ and let $u_i=\tau(f_i).$  The labeling of the vertices forces $v_i=\iota(e_i)<u_i<\tau(e_i),$ so that $e_i$ is order-disrespecting in any cell containing $u_i.$  Furthermore, $u_i$ is blocked in any cell containing $e_i.$  Add each edge $e_i$ and each vertex $u_i$ to $\Phi.$  The edges $e_i$ determine a system of clouds of $\Gamma.$  

Now, if $n>2m$ (which must be the case, by assumption, if there are any vertices of degree 3), we must have $n\ge3,$ so since $\Gamma$ is sufficiently subdivided, if the endpoint $a_l$ falls on an edge $e_i,$ we may shrink or enlarge $A_l$ slightly so that the new initial point falls just beyond $\tau(e_i),$  without changing the fact that the collection $\{A_i\}_{i=1}^k$ is allowable for the collection of degree-3 vertices.  So, we may assume that each endpoint $a_l$ falls in one of the clouds of $\Gamma$ determined by the edges $e_i.$   Then, inductively, for $l=1,\dots,k,$ let $w_l$  be the minimal vertex in the cloud containing $a_l$ which we have not already included in $\Phi,$ and add this vertex to $\Phi.$ Then, $w_1,\dots,w_k$ are all blocked in $\Phi.$ Finally, let $w_{k+1},\dots,w_{n-2m}$ be the first $n-2m-k$ vertices of $\Gamma$ which are not already in $\Phi,$ and add them to $\Phi$ (so they are all blocked).  So, we have
\[
\Phi=\{e_{1},\dots,e_{m},u_1,\dots,u_m,w_{1},\dots,w_{n-2m}\},
\]
where the vertices $w_i$ do not appear if $n=2m.$  The cell $\Phi$ is critical.  Now, we will construct a critical $m$-cell $\Psi$ similarly.  If $v_i$ is a vertex of degree 3, let $e'_i=e_i$ and $u'_i=u_i.$  If $v_i$ is a vertex of degree greater than 3, let $e'_i$ be the edge in direction 3 from $v_i,$ let $f'_i$ be the edge in direction 2, and let $u'_i=\tau(f'_i),$ so $u'_i$ is blocked by $e'_i.$  As above, we have $v'_i=\iota(e'_i)<u'_i<\tau(e'_i),$ so $e'_i$ is order-disrespecting in any cell containing $u'_i.$  Add the edges $e'_i$ and vertices $u'_i$ to the cell $\Psi.$  Similar to the argument above, if $n>2m,$ we may assume each endpoint $b_l$ falls in one of the clouds of $\Gamma$ determined by the edges $e'_i.$  Inductively for $l=1,\dots,k,$ let $w'_l$  be the minimal vertex in the cloud containing $b_l$ which we have not already included in $\Psi,$ and add this vertex to $\Psi.$ Let $w'_{k+1},\dots,w'_{n-2m},$ be the first $n-2m-k$ vertices of $\Gamma$ which are not already in $\Psi,$ and add them in, so now we have
\[
\Psi=\{e'_{1},\dots,e'_{m},u'_1,\dots,u'_m,w'_{1},\dots,w'_{n-2m}\},
\]
where the vertices $w'_i$ do not appear if $n=2m.$  The cell $\Psi$ is critical.   Figure \ref{fig:PartOneA} gives an example for $m=11,\ k=3,$ and $n=27$. The figure on the left is a tree $\Gamma$ with 3 oriented arcs which are allowable for the collection of degree-3 vertices.  The orientations of the arcs are indicated with arrows.

\PartOneA

Now, let $\{[c_1],\dots,[c_m]\}$ be the unique collection of equivalence classes of 1-cells having $[\Phi]$ as its least upper bound.  The  equivalence class $[c_i]$ can be represented using a cloud diagram having a single edge (the edge $e_i$) in direction 2 from the essential vertex $v_i,$ whose degree is $t_i,$ and $t_i$ clouds.  Likewise, let $\{[d_1],\dots,[d_m]\}$ be the unique collection of equivalence classes of 1-cells having $[\Psi]$ as its least upper bound.  The equivalence class $[d_i]$ can be represented using a cloud diagram having a single edge (the edge $e'_i$) and $t_i$ clouds.  If $t_i\ge4,$ then $[c_i]\ne[d_i],$ since the edges $e_i$ and $e'_i$ differ.  

If $t_i=3,$ then the cloud diagram for $[c_i]$ will contain the single edge $e_i$ and three clouds $C_0,\ C_1,\ C_2,$ in the 0, 1, and 2 directions from $v_i.$   The cloud diagram for $[d_i]$ will have the same edge and clouds.  Suppose $f(C)$ is the value assigned to cloud $C$ in the diagram for $[c_i]$ and $g(C)$ is the value assigned to cloud $C$ in the diagram for $[d_i].$ For $\delta=0,1,2,$ let $m_\delta$ be the number of essential vertices in $C_\delta,$ and let $W_\delta=|\{p:w_p\in C_\delta\}|$ and $W'_\delta=|\{p:w'_p\in C_\delta\}|.$ Then $f(C_\delta)=2m_\delta+W_\delta+\lambda_\delta$ and $g(C_\delta)=2m_\delta+W'_\delta+\lambda_\delta,$ where $\lambda_0=\lambda_2=0$ and $\lambda_1=1.$  

Suppose the vertex $v_i$ falls on the interior of an arc $A_l$ (by assumption there is at least one such arc).  For each direction $\delta,$ if the arc $A_l$ is oriented towards $v_i$ in the direction $\delta,$ then $a_l$  must fall in $C_\delta,$ and $b_l$ must fall in a different cloud, so the vertex $w_l$ contributes 1 to $W_\delta,$ but $w'_l$ contributes 0 to $W'_\delta.$  On the other hand, if $A_l$ is oriented away from $v_i$ in the direction $\delta,$ then $b_l$  must fall in $C_\delta,$ and $a_l$ must fall in a different cloud so the vertex $w'_l$ contributes 1 to $W'_\delta,$ but $w_l$ contributes 0 to $W_\delta.$  If $v_i$ does not fall on $A_l,$ then both $w_l$ and $w'_l$ must be in the same cloud, so either $w_l$ and $w'_l$ contribute 1 to $W_\delta$ and $W'_\delta,$ respectively, or each contributes 0 to $W_\delta$ and $W'_\delta,$ respectively.  Finally, note that each $w_i$ and $w'_i$ for $i>k$ must fall in the same cloud as the basepoint $\ast$ in both $\Phi$ and $\Psi$ since $\Gamma$ is sufficiently subdivided, so in this case $w_i$ and $w'_i$ contribute 1 to $W_0$ and $W'_0,$ respectively.

In other words, the difference $W_\delta-W'_\delta$ is equal to the number of arcs oriented towards $v_i$ in the direction $\delta$ minus the number of arcs oriented away from $v_i$ in the direction $\delta.$  This is the number $\eta_\delta(v_i)$ in Definition \ref{defn:AllowableOrientations}, so by assumption, there must be at least one direction $\delta$ such that $W_\delta\ne W'_\delta.$  Furthermore, by the remarks following  the same definition, there must be directions $\delta$ and $\delta'$ such that $W'_\delta-W_\delta>0$ and $W'_{\delta'}-W_{\delta'}<0,$ and therefore, $g(C_\delta)>f(C_\delta)$ and $g(C_{\delta'})<f(C_{\delta'}).$  In this case, call $C_\delta$ and $C_{\delta'}$ positive and negative clouds, respectively, to reflect that $[d_i]$ has more vertices than $[c_i]$ in direction $\delta$ (from $v_i$) and less vertices in direction $\delta'.$  It is possible that either  all 3 clouds are categorized as either positive or negative or that one cloud remains uncategorized.  But, in either case, we have $[c_i]\ne[d_i].$  As an example, the Figure \ref{fig:PartOneB} shows the classes $[c_4]$ and $[d_4]$ with $\Gamma$ as in Figure \ref{fig:PartOneA}.

\PartOneB

Furthermore, it follows from this description that $[c_i]\ne[d_j]$ for any $i$ and $j.$  Indeed, if $[c_i]=[d_j],$ then both classes must contain a common edge. But, this can only happen if $i=j$ and $t_i=3,$ but we just saw $[c_i]\ne[d_i]$ in this case.   So, the cells $\Phi$ and $\Psi$ satisfy the hypotheses of Lemma \ref{lemma:products}, proving statement \ref{nLarge}. 

The construction for statement \ref{nNotLarge} is similar. Here, by Theorem \ref{thm:kcells}, $UD^n(\Gamma)$ is homotopic to a complex of dimension $q.$  Let $v_1,\dots,v_r$ be the vertices of degree greater than 3, and let $\tilde{v}_1,\dots,\tilde{v}_s$ be the vertices of degree 3.  We consider the 2 cases in the statement:

{\bf Case \ref{nSmall}, $s\ge2(q-r)$:} Let $R=\mathrm{min}\{r,q\},$ and for $i=1,\dots,R,$ let $e_i,\ u_i,\ e'_i,$ and $u'_i$ be as in the proof of statement \ref{nLarge} (so that $e_i,e'_i$ are order-disrespecting edges in directions 2 and 3 from $v_i$, and $u_i,u'_i$ are blocked vertices in directions 1 and 2).  If $r<q,$ for $i=1,\dots,2(q-r),$ let $\tilde{e}_i$ be the edge in direction 2 from $\tilde{v}_i,$ and let $\tilde{u}_i$ be the vertex on the edge in direction 1 from $\tilde{v}_i$ which forces $\tilde{e}_i$ to be order-disrespecting. If $\epsilon$=0, let
\begin{align*}
\Phi&=\{e_{1},\dots,e_{R},u_1,\dots,u_R,\tilde{e}_1,\dots,\tilde{e}_{q-r},\tilde{u}_1,\dots\tilde{u}_{q-r}\},\\
\Psi&=\{e'_{1},\dots,e'_{R},u'_1,\dots,u'_R,\tilde{e}_{q-r+1},\dots,\tilde{e}_{2(q-r)},\tilde{u}_{q-r+1},\dots,\tilde{u}_{2(q-r)}\},
\end{align*}
where the edges $\tilde{e}_i$ and vertices $\tilde{u}_i$ do not appear if $r\ge q.$  If $\epsilon=1,$ add the vertex $\ast$ to each cell. Figure \ref{fig:PartTwoA} gives an example for $\Gamma$ as in Figure \ref{fig:PartOneA} with $q=5,\ \epsilon=0.$  

\PartTwoA

For the same reasons as in the first part of the proof, the cells $\Phi$ and $\Psi$ are critical, and if $\{[c_1],\dots,[c_{q}]\}$ and $\{[d_1],\dots,[d_q]\}$ are the collections of equivalence classes of 1-cells having $[\Phi]$ and $[\Psi]$ as their least upper bounds, here it is clear that $[c_i]\ne[d_j]$ for any $i$ and $j$ since no edge in $\Phi$ is in $\Psi.$ 

{\bf Case \ref{nMedium}, $s<2(q-r)$:}  First, consider the $\epsilon=0$ case.  Let $\{A_i\}_{i=1}^k$ and $\mathcal{V}$ be as in the statement.  For $l=1,\dots,k,$ let $V_l$ and $V_l'$ be the initial and terminal vertices of the arc $A_l$ (again with respect to the orientation of $A_l$). Let $E_l$ be the edge in direction 2 from $V_l,$ and let $U_l$ be the blocked vertex in direction 1 from $V_l$ which forces $E_l$ to be order-disrespecting. Define $E'_l$ and $U'_l$ similarly (so that $E'_l$ is in direction 2 from $V'_l$ and $U'_l$ is in direction 1).  Note we must have $2k\le m-r'$ so $s+r=m\ge 2k+r',$ and also $r\ge r',$ so by assumption, we have $2q> s+2r=s+r+r\ge2k+r'+r\ge2k+2r',$ and therefore $q-r'-k>0.$

Let $\widehat{V}_1,\dots,\widehat{V}_{r'}$ be the vertices of degree greater than 3 which aren't endpoints of any arc $A_l.$  Define $\widehat{E}_i$ and $\widehat{U}_i$ analogously to the definitions of $E_l$ and $U_l.$  Let $\widehat{E}'_i$ be the edge in direction 3 from $\widehat{V}_i,$ and let $\widehat{U}'_i$ be the vertex in direction 2 which forces $\widehat{E}'_i$ to be order-disrespecting.  Let $\widetilde{V}_1,\dots,\widetilde{V}_{q-r'-k}$ be the first $q-r'-k$ vertices in $\mathcal{V}.$  Define $\widetilde{E}_i$ and $\widetilde{U}_i$ analogously to $E_i$ and $U_i.$  Let
\begin{align*}
\Phi&=\{E_1,\dots,E_k,U_1,\dots,U_k,\widehat{E}_1,\dots,\widehat{E}_{r'},\widehat{U}_1,\dots,\widehat{U}_{r'},\\
&\hspace{4cm}\widetilde{E}_1,\dots,\widetilde{E}_{q-r'-k},\widetilde{U}_1,\dots,\widetilde{U}_{q-r'-k}\},\\
\Psi&=\{E'_1,\dots,E'_k,U'_1,\dots,U'_k,\widehat{E}'_1,\dots,\widehat{E}'_{r'},\widehat{U}'_1,\dots,\widehat{U}'_{r'},\\
&\hspace{4cm}\widetilde{E}_1,\dots,\widetilde{E}_{q-r'-k},\widetilde{U}_1,\dots,\widetilde{U}_{q-r'-k}\}.
\end{align*}

Figure \ref{fig:PartTwoBi} gives an example for $q=8,\epsilon=0,\ r'=2,$ and $k=3.$  The set $\mathcal{V}$ consists of the vertices $\widetilde{V}_1,\widetilde{V}_2,$ and $\widetilde{V}_3,$ so that $|\mathcal{V}|=3=q-r'-k.$ 

\PartTwoBi

The cells $\Phi$ and $\Psi$ are critical.  As above, if $\{[c_1],\dots,[c_{q}]\}$ and $\{[d_1],\dots,[d_q]\}$ are the collections of equivalence classes of 1-cells having $[\Phi]$ and $[\Psi]$ as their least upper bounds, and if $[c_i]=[d_j]$ for some $i$ and $j,$ then $[c_i]$ and $[d_j]$ must have a common edge $e$ which can only happen if $e=\widetilde{E}_t$ for some $t,$ so in particular, $\widetilde{V}_t$ is of degree 3.  For such a $t$ and each $\delta=0,1,2,$ let $C_\delta$ be the cloud in direction $\delta$ from $\widetilde{V}_t,$ and let 
\begin{align*}
M_\delta&=|\{p:V_p\in C_\delta\}|, &M'_\delta&=|\{p:V'_p\in C_\delta\}|,\\
\widehat{M}_\delta&=|\{p:\widehat{V}_p\in C_\delta\}|, &\widetilde{M}_\delta&=|\{p:p\ne t,\widetilde{V}_p\in C_\delta\}|.
\end{align*}
Then, if $f(C_\delta)$ and $g(C_\delta)$ are the values of $C_\delta$ in the diagrams for $[c_i]$ and $[d_j],$ and $\lambda_\delta$ is as above, we have 
\begin{align*}
f(C_\delta)&=2(M_\delta+\widehat{M}_\delta+\widetilde{M}_\delta)+\lambda_\delta,\\
g(C_\delta)&=2(M'_\delta+\widehat{M}_\delta+\widetilde{M}_\delta)+\lambda_\delta,
\end{align*}
so that $f(C_\delta)-g(C_\delta)=2(M_\delta-M'_\delta).$  Arguments similar given to those in the first part of the proof shows that $M_\delta-M'_\delta=\eta_\delta(\widetilde{V}_t),$ so again we see that at least two of the clouds around $\widetilde{V}_t$ can be categorized as either positive or negative, and each of these clouds has a different value in the diagram for $[c_i]$ than it does in the diagram for $[d_j],$ so $[c_i]\ne[d_j].$  

For $\epsilon=1,$ construct $\Phi$ and $\Psi$ as above with the following modifications.  First, the vertices $\widetilde{V}_1,\dots,\widetilde{V}_{q-r'-k}$ are the first vertices in $\mathcal{W}'\cup\mathcal{W}$ (where $k=0$ and $r'=r$ if $s'\ge q-r$). Next, since $n=2q+1\ge3,$ we can again assume the initial and terminal endpoints of $A_0$ fall in a cloud in the collection of clouds determined by the edges in $\Phi$ and $\Psi,$ respectively.  Then, let $x$ be a vertex in the same cloud as the initial endpoint of $A_0$ (in the system of clouds determined by the edges of $\Phi$) such that $x$ is blocked in
\begin{align*}
\Phi&=\{E_1,\dots,E_k,U_1,\dots,U_k,\widehat{E}_1,\dots,\widehat{E}_{r'},\widehat{U}_1,\dots,\widehat{U}_{r'},\\
&\hspace{4cm}\widetilde{E}_1,\dots,\widetilde{E}_{q-r'-k},\widetilde{U}_1,\dots,\widetilde{U}_{q-r'-k},x\},
\end{align*}
and let $x'$ be a vertex in the same cloud as the terminal endpoint of $A_0$ (in the system of clouds determined by the edges of $\Psi$) such that $x'$ is blocked in
\begin{align*}
\Psi&=\{E'_1,\dots,E'_k,U'_1,\dots,U'_k,\widehat{E}'_1,\dots,\widehat{E}'_{r'},\widehat{U}'_1,\dots,\widehat{U}'_{r'},\\
&\hspace{4cm}\widetilde{E}_1,\dots,\widetilde{E}_{q-r'-k},\widetilde{U}_1,\dots,\widetilde{U}_{q-r'-k},x'\}.
\end{align*}
Here, we of course assume that $x$ is distinct from the other vertices in $\Phi$ and $x'$ is distinct from the other vertices in $\Psi.$  If $s'\ge q-r,$ the edges $E_i$ and $E'_i$ and the vertices $U_i$ and $U'_i$ do not appear in $\Phi$ and $\Psi.$  

For example, for $n=17,$ we can let the arcs $A_1,\ A_2,$ and $A_3$ be as in Figure \ref{fig:PartTwoBi}, so $\mathcal{W}=\{\widetilde{V}_1,\widetilde{V}_2,\widetilde{V}_3\},$ and trivially let the arc $A_0$ be the unique edge which has $\ast$ as an endpoint, so that $\mathcal{W}'=\emptyset.$  For a less trivial example, if $q=9,\ \epsilon=1,$ we can let $A_0$ be the arc $A_3$ in Figure \ref{fig:PartTwoBi} slightly enlarged so that its interior includes its two original endpoints labeled $V_3$ and $V_3'$ in Figure \ref{fig:PartTwoBi}, so that here now we have $k=2,\ r'=2.$  See Figure \ref{fig:PartTwoBii}.  The set $\mathcal{W}'$ consists of the vertices labeled $\widetilde{V}_1,\dots,\widetilde{V}_5,$ and $\mathcal{W}=\emptyset$ so that $|\mathcal{W}|=0=9-2-2-5=q-r'-k-s'.$  Alternatively, we can let $\mathcal{W}'$ consist of the vertices labeled $\widetilde{V}_3,\widetilde{V}_5$ and let $\mathcal{W}$ consist of the vertices labeled $\widetilde{V}_1,\widetilde{V}_2,\widetilde{V}_4$ in Figure \ref{fig:PartTwoBii}, so that $s'=2$ and $|\mathcal{W}|=3=9-2-2-2=q-r'-k-s'.$ 

\PartTwoBii

Again, the cells $\Phi$ and $\Psi$ are critical.  If $\{[c_1],\dots,[c_{q}]\}$ and $\{[d_1],\dots,[d_q]\}$ are the collections of equivalence classes of 1-cells having $[\Phi]$ and $[\Psi]$ as their least upper bounds, and $[c_i]=[d_j]$ for some $i$ and $j,$ then again it must be the case that $[c_i]$ and $[d_j]$ have a common edge $\widetilde{E}_t,$ so $\widetilde{V}_t$ is of degree 3.  Now, with the notation from the $\epsilon=0$ case, we have
\begin{align*}
f(C_\delta)&=2(M_\delta+\widehat{M}_\delta+\widetilde{M}_\delta)+\lambda_\delta+\mu_\delta,\\
g(C_\delta)&=2(M'_\delta+\widehat{M}_\delta+\widetilde{M}_\delta)+\lambda_\delta+\mu'_\delta,
\end{align*}
where $\mu_\delta=1$ if $x$ falls in $C_\delta,$ and $\mu_\delta=0$ if $x$ does not fall in $C_\delta,$ and similarly, $\mu'_\delta=1$ if $x'$ falls in $C_\delta,$ and $\mu'_\delta=0$ if $x'$ does not fall in $C_\delta.$  Since the endpoints of $A_0$ fall in clouds determined by the edges in $\Phi$ and $\Psi,$ the vertex $\widetilde{V}_t$ cannot be an endpoint, so it either does not fall on $A_0$ or it falls on the interior of $A_0.$  

If $\widetilde{V}_t$ does not fall on $A_0,$ so that we have $\widetilde{V}_t\in\mathcal{W},$ then $x$ and $x'$ are in the same cloud in the diagrams for $[c_i]$ and $[d_j],$ so that $\mu_\delta=\mu'_\delta$ for each $\delta,$ and therefore $f(C_\delta)-g(C_\delta)=2(M_\delta-M'_\delta)=2\eta_\delta(\widetilde{V}_t),$ and since the collection $\{A_i\}_{i=1}^k$ is allowable for $\mathcal{W},$ we can categorize at least one cloud as positive and one as negative as above, so that $[c_i]\ne[d_j].$  

If $\widetilde{V}_t$ falls on the interior of $A_0,$ then it is possible that $\widetilde{V}_t\in\mathcal{W}$ or $\widetilde{V}_t\in\mathcal{W}',$ but in either case, $x$ and $x'$ must fall in different clouds $C_{\delta_1}$ and $C_{\delta_2}.$ Then, we have 
\begin{align*}
f(C_{\delta_1})&=2(M_{\delta_1}+\widehat{M}_{\delta_1}+\widetilde{M}_{\delta_1})+\lambda_{\delta_1}+1\\
g(C_{\delta_1})&=2(M'_{\delta_1}+\widehat{M}_{\delta_1}+\widetilde{M}_{\delta_1})+\lambda_{\delta_1}+0,
\end{align*}
so now $f(C_{\delta_1})-g(C_{\delta_1})=2(M_{\delta_1}-M'_{\delta_1})+1,$ which is odd and therefore non-zero.  Likewise, $f(C_{\delta_2})-g(C_{\delta_2})=2(M_{\delta_2}-M'_{\delta_2})-1,$ which is again odd and therefore non-zero.  Furthermore, since the sum of the values of the clouds around $\widetilde{V}_t$ must equal $n-1$ in each cell, we have 
\[
f(C_0)-g(C_0)+f(C_1)-g(C_1)+f(C_2)-g(C_2)=0,
\]
so  that $g(C_\delta)-f(C_\delta)>0$ and $g(C_{\delta'})-f(C_{\delta'})<0$ for two directions $\delta\ne\delta',$ so again at least one cloud is categorized as positive and one as negative, so $[c_i]\ne[d_j].$

Therefore, in all cases, the conditions of Lemma \ref{lemma:products} are met, so 
\[
TC(UD^n(\Gamma)),TC(D^n(\Gamma))\ge2q+1.
\]
This, combined with the upper bounds stated at the beginning of the proof, gives the result.
\end{proof}

It is worth noting that if we insist no vertices have degree 3, the statement of Theorem \ref{thm:Main} becomes much simpler and determines the topological complexity for all $n$ for both configuration spaces, provided they are connected. 
\begin{cor}\label{cor:NoDegreeThree}
Let $\Gamma$ be a tree with no vertices of degree 3. Let $k=\min\left\{\left\lfloor\frac{n}{2}\right\rfloor,m(\Gamma)\right\}.$  Then, $TC(UC^n(\Gamma))= 2k+1.$  Also, if $n=1$ or $m(\Gamma)\ge1,$ then $TC(C^n(\Gamma))=2k+1.$
\end{cor}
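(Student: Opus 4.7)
The plan is to apply Theorem \ref{thm:Main} directly and observe that the absence of degree-$3$ vertices causes its combinatorial hypotheses to collapse. Specifically, the empty collection of arcs is vacuously allowable for the empty set of degree-$3$ vertices, so the ``allowable'' conditions become automatic and the only constraints that remain are simple numerical inequalities between $n$ and $m := m(\Gamma).$

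First I would dispose of two edge cases not directly covered by Theorem \ref{thm:Main}. When $n = 1,$ both $C^1(\Gamma)$ and $UC^1(\Gamma)$ equal $\Gamma,$ which is contractible as a tree, so $TC = 1 = 2k+1$ with $k = 0.$ When $m = 0,$ the tree $\Gamma$ is a point or a closed interval, and sorting the coordinates provides a deformation retraction of $UC^n(\Gamma)$ onto a single point, giving $TC(UC^n(\Gamma)) = 1 = 2k+1$ again with $k = 0.$ The subcase $n \ge 2, m = 0$ is excluded from the $C^n(\Gamma)$ portion of the statement by hypothesis, so no extra work is needed there.

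For the main case, assume $n \ge 2$ and $m \ge 1.$ Since $\Gamma$ then has a vertex of degree at least $4$ and is a tree (hence neither a closed interval nor a circle), Theorem \ref{thm:connectivity} ensures both $C^n(\Gamma)$ and $UC^n(\Gamma)$ are path-connected. I would then split on whether $n \ge 2m.$ If $n \ge 2m,$ then $k = m,$ and Theorem \ref{thm:Main}(\ref{nLarge}) applies with its integer $k$ taken to be $0,$ yielding $TC(C^n(\Gamma)) = TC(UC^n(\Gamma)) = 2m + 1 = 2k+1.$ If $n < 2m,$ write $n = 2q + \epsilon$ with $\epsilon \in \{0,1\};$ then $q = \lfloor n/2 \rfloor = k < m.$ In Theorem \ref{thm:Main}(\ref{nNotLarge}) we have $r = m$ and $s = 0,$ so the inequality $s \ge 2(q - r)$ becomes $0 \ge 2(q - m),$ which holds. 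Hence case (\ref{nSmall}) applies, giving $TC(C^n(\Gamma)) = TC(UC^n(\Gamma)) = 2q + 1 = 2k+1.$

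There is no substantive obstacle: the corollary is bookkeeping on top of the already-proved Theorem \ref{thm:Main}. The only points requiring care are verifying that the allowability hypotheses are vacuous when there are no degree-$3$ vertices, and separately handling the contractible edge cases $n = 1$ and $m = 0.$
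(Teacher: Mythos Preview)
Your proposal is correct and follows essentially the same approach as the paper: both apply Theorem~\ref{thm:Main}(\ref{nLarge}) when $n\ge 2m$ (with the theorem's $k$ equal to $0$ since there are no degree-3 vertices), apply Theorem~\ref{thm:Main}(\ref{nSmall}) when $n<2m$ via the inequality $0=s\ge 2(q-r)=2(q-m)$, and handle the edge cases $n=1$ and $m=0$ by direct contractibility. The only cosmetic difference is that for $m=0$ the paper cites Lemma~\ref{lemma:MotionPlanning} for the upper bound $TC\le 1$, whereas you argue contractibility of $UC^n(\text{interval})$ directly; both are valid.
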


\begin{proof}  Let $r$ and $s$ be as in Theorem \ref{thm:Main}, so that $r=m(\Gamma)=:m,$ and $s=0.$  If $m\ge1$ and $n\ge2m,$ the claim follows from statement \ref{nLarge} of the theorem.  If $n=2q+\epsilon<2m=2r,$ with $\epsilon\in\{0,1\}$ and $q\ge1,$ then $r>q,$ so $2(q-r)<0=s,$ and the claim follows from statement \ref{nSmall}.  If $m=0,$ so that $\Gamma$ is homeomorphic to a closed interval, then Lemma \ref{lemma:MotionPlanning} gives $TC(UD^n(\Gamma))\le1,$ but $TC(X)\ge1$ for any space, so, $TC(UC^n(\Gamma))=1.$ For the remaining case, $n=1,\ m\ge1,$ we have $C^n(\Gamma)=UC^n(\Gamma)=\Gamma,$ so all three spaces have topological complexity 1 since $\Gamma$ is contractible.
\end{proof}

Now, we discuss how in some sense, the results in Theorem \ref{thm:Main} are the best we can achieve with the methods used here.  Consider the case $2m\le n<2m+k,$ with $m$ and $k$ as in the first part of the theorem.  In this case, a construction similar to the one given in the proof shows that there is a critical $m$-cell, which corresponds to a non-zero $m$-dimensional cohomology class, so that the space $UD^n(\Gamma)$ cannot be homotopic to a space of dimension less than $m,$ so the dimensional bound given in Theorem \ref{thm:upperbound} cannot improve the bound given by the explicit motion planning algorithm in Lemma \ref{lemma:MotionPlanning}.  Likewise, if $q,r,$ and $s$ are as in the second part of the theorem, but the appropriate collection of arcs does not exist, then there will still be a critical $q$-cell, so again the dimension cannot improve the upper bound.  The following shows that Lemma \ref{lemma:products} cannot be used to get improved lower bounds.
\begin{prop}\label{prop:nobetter}
Let $\Gamma$ be a tree with $m:=m(\Gamma)\ge1.$
\begin{enumerate}
\item\label{nobetter:nLarge} Let $k$ be as in statement \ref{nLarge} of Theorem \ref{thm:Main}, and assume there is as least one vertex of degree 3 so that $k\ge1.$  Let $n$ satisfy $n\ge 2m,$ and consider any two critical $m$-cells $\Phi$ and $\Psi$ of $UD^n(\Gamma).$ If $\{[c_1],\dots,[c_m]\}$ and $\{[d_1],\dots,[d_m]\}$ are the unique collections of equivalence classes of 1-cells having $[\Phi]$ and $[\Psi]$ as their least upper bounds, and for all $i$ and $j,$ we have $[c_i]\ne[d_j],$ then $n\ge2m+k.$
\item\label{nobetter:nNotLarge} Let $q,r,s,\epsilon$ be as in statement \ref{nMedium} of Theorem \ref{thm:Main}, and consider critical $q$-cells $\Phi$ and $\Psi$ of $UD^n(\Gamma).$  If $\{[c_1],\dots,[c_q]\}$ and $\{[d_1],\dots,[d_q]\}$ are the unique collections of equivalence classes of 1-cells having $[\Phi]$ and $[\Psi]$ as their least upper bounds and for all $i$ and $j,$ we have $[c_i]\ne[d_j],$ then 
\begin{enumerate}
\item\label{nobetterEven} if $\epsilon=0,$ there is some $k\ge1$ such that there exist oriented arcs $A_1,\dots,A_k$ with the following properties:
\begin{enumerate}
\item The endpoints of each $A_l$ are (distinct) essential vertices, neither of which is an endpoint of any other $A_{l'},$
\item  There are $r'\le r$ vertices of degree greater than 3 which are not the endpoints of any $A_l,$
\item  There is a collection $\mathcal{V}$ of degree-3 vertices, with $|\mathcal{V}|\ge q-r'-k$ such that $\{A_i\}_{i=1}^k$ is allowable for $\mathcal{V}.$
\end{enumerate}
\item\label{nobetterOdd} if $\epsilon=1,$ there is an arc $A_0$ whose endpoints have no restrictions and whose interior includes a collection $\mathcal{W}'$ of $s'\le q$ distinct vertices of degree 3, and if $s'<q-r,$ there are arcs $A_1,\dots,A_k,$ as above whose endpoints are also not vertices in $\mathcal{W}',$ and there is another collection of degree-3 vertices, $\mathcal{W}$, such that $\mathcal{W}\cap\mathcal{W}'=\emptyset,\ |\mathcal{W}|\ge q-r'-k-s'$ and $\{A_i\}_{i=1}^k$ is allowable for $\mathcal{W},$ where $r'$ is as above.
\end{enumerate} 
\end{enumerate}
\end{prop}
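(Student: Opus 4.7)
The plan is to reverse the arc-to-cell construction from the proof of Theorem \ref{thm:Main}: given critical cells $\Phi$ and $\Psi$ satisfying the distinctness hypothesis of Lemma \ref{lemma:products}, I extract from them an oriented arc collection whose size and allowability properties force the desired conclusion.

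For statement \ref{nobetter:nLarge}, I first observe that any critical $m$-cell must use each of the $m$ essential vertices of $\Gamma$ as the initial endpoint of an order-disrespecting edge, by counting. At a degree-3 vertex $v_i$, the only available direction for such an edge is $2$, and the only available direction for the corresponding blocker is $1$, so the edges and blockers of $\Phi$ and $\Psi$ at every degree-3 vertex are forced to agree. Consequently the 1-cells $[c_i]$ and $[d_i]$ share their unique edge, and the hypothesis $[c_i]\ne[d_i]$ forces their cloud values to differ in some direction around $v_i$. Now write the ``extra'' vertices of $\Phi$ and $\Psi$ as $w_1,\dots,w_{n-2m}$ and $w'_1,\dots,w'_{n-2m}$, choose any bijection between them, and for each pair $(w_j,w'_j)$ with $w_j\ne w'_j$ let $A_j$ be the geodesic arc from $w_j$ to $w'_j$ oriented from the former to the latter. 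Reversing the cloud-value computation used in the proof of Theorem \ref{thm:Main}, one checks that at every degree-3 vertex $v_i$ one has $\eta_\delta(v_i)=W_\delta-W'_\delta$, where $W_\delta$ and $W'_\delta$ count the extras in direction $\delta$ from $v_i$ in $\Phi$ and $\Psi$ respectively; distinctness makes this non-zero in at least one $\delta$, so the collection is allowable for the set of all degree-3 vertices. Moreover each $w_j$ and $w'_j$ is disjoint from every essential vertex, so no essential vertex is an arc endpoint. The collection has at most $n-2m$ arcs, so the minimality of $k$ forces $n\ge 2m+k$.

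For statement \ref{nobetter:nNotLarge}, the same template applies, but a critical $q$-cell uses only $q$ of the $m$ essential vertices so additional structure must be extracted. Let $S_\Phi,S_\Psi$ be the sets of essential vertices used by $\Phi,\Psi$, both of size $q$, and set $k:=|S_\Phi\setminus S_\Psi|=|S_\Psi\setminus S_\Phi|=q-|S_\Phi\cap S_\Psi|$. Pair $S_\Phi\setminus S_\Psi$ with $S_\Psi\setminus S_\Phi$ by any bijection $\sigma$, and for each $V\in S_\Phi\setminus S_\Psi$ let $A_l$ be the geodesic arc from $V$ to $\sigma(V)$ oriented from $V$ to $\sigma(V)$; these arcs have distinct essential vertex endpoints. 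The degree-$\ge\! 4$ vertices in $S_\Phi\cap S_\Psi$ supply the $r'$ free vertices of condition (\ref{nobetterEven}), and the degree-3 vertices of $S_\Phi\cap S_\Psi$ form $\mathcal{V}$, with $|\mathcal{V}|=q-r'-k$. At each $v\in\mathcal{V}$ the local structure of $\Phi$ and $\Psi$ agrees, and so the cloud-value difference of $[c_i]$ versus $[d_i]$ is controlled entirely by the distribution of arc endpoints, giving $\eta_\delta(v)=M_\delta-M'_\delta$ where $M_\delta, M'_\delta$ count elements of $S_\Phi\setminus S_\Psi$, $S_\Psi\setminus S_\Phi$ in direction $\delta$; distinctness forces this to be non-zero somewhere. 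In the $\epsilon=1$ case, the single additional blocked vertex in each cell yields the endpoints of an arc $A_0$, and any degree-3 vertex lying on the interior of $A_0$ but outside $S_\Phi\cup S_\Psi$ enters $\mathcal{W}'$, while the shared degree-3 vertices in $S_\Phi\cap S_\Psi$ either get classified into $\mathcal{W}$ or are already absorbed by $\mathcal{W}'$ depending on whether or not they lie on $A_0$.

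The main obstacle I anticipate is the cloud-value computation in statement \ref{nobetter:nNotLarge}: carefully separating the contributions of the arcs $A_1,\dots,A_k$, the arc $A_0$, and the edge/blocker discrepancies at the degree-$\ge\! 4$ vertices of $S_\Phi\cap S_\Psi$, so that the net cloud-value difference at each shared degree-3 vertex reduces to precisely $\eta_\delta(v)$ for the constructed arc collection. This demands a direction-by-direction accounting analogous to but more delicate than the one in the proof of Theorem \ref{thm:Main}, and ensuring that the produced $\mathcal{W}$ and $\mathcal{W}'$ are disjoint and satisfy $|\mathcal{W}|\ge q-r'-k-s'$ in the $\epsilon=1$ case will require treating the cases $s'\ge q-r$ and $s'<q-r$ separately.
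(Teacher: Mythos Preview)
Your plan is essentially the paper's own proof run in the direct rather than contrapositive direction: the paper assumes $n<2m+k$ (respectively, assumes the arc data fail to exist) and produces a coincidence $[c_i]=[d_j]$, whereas you assume distinctness and manufacture the arc data, but the construction---geodesics between the extra vertices $w_j,w'_j$ in part \ref{nobetter:nLarge}, and between paired elements of $S_\Phi\setminus S_\Psi$ and $S_\Psi\setminus S_\Phi$ in part \ref{nobetter:nNotLarge}---and the identification of cloud-value differences with the numbers $\eta_\delta$ are the same.  One slip to fix in the $\epsilon=1$ case: $\mathcal{W}'$ should be the degree-$3$ vertices of $S_\Phi\cap S_\Psi$ that lie on $A_0$, not those \emph{outside} $S_\Phi\cup S_\Psi$; the latter play no role in any cloud-value comparison (neither cell has an edge there), and including them could push $s'$ above $q$.
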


\begin{proof} We use the contrapositive for both statements \ref{nobetter:nLarge} and \ref{nobetter:nNotLarge}.  For the first statement, let $\Phi$ and $\Psi$ be critical $m$-cells, and assume $2m\le n<2m+k.$  For each essential vertex $v,$ each cell must contain exactly one edge $e$ having $\iota(e)=v$ and a blocked vertex $u$ which makes $e$ order-disrespecting. There is no choice for $e$ or $u$ if $v$ is of degree 3.  

If $n>2m,$ let $w_{1},\dots,w_{n-2m}$ be the remaining vertices in $\Phi,$ and let $w'_1,\dots,w'_{n-2m}$ be the remaining vertices in $\Psi.$  For $l=1,\dots,n-2m<k,$ let $A_l$ be the geodesic from $w_l$ to $w'_l,$ oriented so that $w_l$ is the initial endpoint and $w'_l$ is the terminal endpoint if $w_l\ne w'_l.$  If $w_l=w'_l,$ extend $A_l$ slightly so that it is a small arc starting at $w_l$ which doesn't intersect any essential vertices.  This gives a collection of less than $k$ oriented arcs in $\Gamma,$ so by the minimality of $k,$ this collection cannot be allowable for the set of vertices of degree 3 in $\Gamma.$  It is not possible that a degree-3 vertex $v$ is an endpoint of any $A_l,$ since no $w_l$ or $w'_l$ can be essential.  Therefore, there is some degree-3 vertex $v$ which has the property $\eta_0(v)=\eta_1(v)=\eta_2(v)=0.$  Let $e$ be the edge in direction 2 from $v.$  This edge must be in both $\Phi$ and $\Psi.$  Suppose $i$ and $j$ have the property that $e$ is the unique edge in $[c_i]$ and $[d_j].$  Then, in the notation from the proof of Theorem \ref{thm:Main}, for $\delta=0,1,2$ and each cloud $C_\delta$ in the diagrams for $[c_i]$ and $[d_j],$ we have $f(C_\delta)=2m_\delta+W_\delta+\lambda_\delta$ and $g(C_\delta)=2m_\delta+W'_\delta+\lambda_\delta.$  But, as above, we have $W_\delta-W'_\delta=\eta_\delta(v)=0$ for each $\delta,$ and therefore $W_\delta=W'_\delta$ and $f(C_\delta)=g(C_\delta),$ for each $\delta,$ so $[c_i]=[d_j].$

If $n=2m,$ then for each vertex $v$ of degree 3, the edge $e$ in direction 2 from $v$ must appear in both cells, so again if $[c_i]$ and $[d_j]$ are the equivalence classes which contain the edge $e,$ we now have $f(C_\delta)=2m_\delta+\lambda_\delta=g(C_\delta)$ for each $\delta=0,1,2,$ so $[c_i]=[d_j].$

For statement \ref{nobetter:nNotLarge}, assume that the collections of arcs and vertices in \ref{nobetterEven} or \ref{nobetterOdd} do not exist for the appropriate value of $\epsilon.$  Note first that if $\epsilon=0,$ any critical $q$-cell must consist solely of edges $e_1,\dots,e_q$ and blocked vertices $u_1,\dots,u_q$ which force the edges $e_1,\dots,e_q$ to be order-disrespecting.  In particular, each $e_i$ has the property that $\iota(e_i)$ is an essential vertex.  If $\epsilon=1,$ then the same is true, except that the cell contains one additional blocked vertex.  Let $\Phi$ and $\Psi$ be critical $q$-cells, and let $\mathcal{S}$  (resp. $\mathcal{S}'$) be the set of essential vertices $v$ such that $\iota(e)=v$ for some $e$ in $\Phi$ (resp. $\Psi$), so $|\mathcal{S}|=|\mathcal{S}'|=q.$  

Let $\mathcal{T}=\mathcal{S}\setminus (\mathcal{S}\cap \mathcal{S}')$ and $\mathcal{T}'=\mathcal{S}'\setminus(\mathcal{S}\cap \mathcal{S}'),$ so $|\mathcal{T}|=|\mathcal{T}'|=:k.$ The fact that $s<2(q-r)$ implies that at least one vertex of degree 3 appears in $\mathcal{S}\cap \mathcal{S}'.$  Let $\widetilde{V}_1,\dots,\widetilde{V}_{\overline{s}}$ be the vertices of degree 3 in $\mathcal{S}\cap \mathcal{S}',$ so that $\overline{s}>0$ and let $\widehat{V}_1,\dots,\widehat{V}_{\overline{r}}$  be the vertices of degree greater than 3 in $\mathcal{S}\cap \mathcal{S}'.$  The edge in direction 2 from each vertex $\widetilde{V}_t$ must be included in both $\Phi$ and $\Psi.$  In what follows, $[c_i]$ and $[d_j]$ are the equivalence classes which contain the edge in direction 2 from whichever vertex $\widetilde{V}_t$ is being discussed.

If $\epsilon=1,$ let $x$ (resp. $x'$) be the additional vertex in $\Phi$ (resp. $\Psi$).  Let $A_0$ be the geodesic from $x$ to $x'$ and extend $A_0$ slightly if $x= x'$ so that it is a small arc which doesn't intersect any vertex in $\mathcal{S}$ or $\mathcal{S'}.$  Let $\mathcal{W}'$ be the set of degree-3 vertices in $\mathcal{S}\cap\mathcal{S}'$ which fall on the interior of $A_0,$ and let $s'=|\mathcal{W}'|.$  By the assumption of the non-existence of the appropriate collections of arcs and vertices, we have $s'<q-r.$

First consider the case $k=0.$ Here, we have $\mathcal{S}=\mathcal{S}',$ and $\overline{s}+\overline{r}=q.$  If $\epsilon=0,$ then for each vertex $\widetilde{V}_t\in \mathcal{S}\cap \mathcal{S}'=\mathcal{S}=\mathcal{S'},$ we have, with the notation from the proof of Theorem \ref{thm:Main}, $f(C_\delta)=2(\widetilde{M}_\delta+\widehat{M}_\delta)+\lambda_\delta=g(C_\delta),$ so $[c_i]=[d_j].$  If $\epsilon=1,$ since we have $s'<q-r,$ there must be at least one vertex $\widetilde{V}_t$ which does not fall on $A_0.$  Then, $x$ and $x'$ must fall in the same cloud in the system of clouds determined by the edge in direction 2 from $\widetilde{V}_t.$  Again using the notation from the proof of Theorem \ref{thm:Main}, for each $\delta,$ we have $\mu_\delta=\mu'_\delta,$ so $f(C_\delta)=2(\widetilde{M}_\delta+\widehat{M}_\delta)+\lambda_\delta+\mu_\delta=2(\widetilde{M}_\delta+\widehat{M}_\delta)+\lambda_\delta+\mu'_\delta=g(C_\delta),$ and therefore $[c_i]=[d_j].$

If $k\ge1,$ let $V_1,\dots,V_{k}$ and $V'_1,\dots,V'_{k}$ be the vertices in $\mathcal{T}$ and $\mathcal{T}',$ respectively, and let $A_l$ be the oriented geodesic from $V_l$ to $V'_l.$  This gives a collection of $k$ arcs with $2k$ distinct essential endpoints if $\epsilon=0,$ and one additional arc $A_0$ if $\epsilon=1.$  Also note that $\overline{s}+\overline{r}+k=q$ and $\overline{r}\le r',$ where $r'$ is the number of vertices of degree greater than 3 which are not endpoints of any $A_l,$ so $\overline{s}=q-\overline{r}-k\ge q-r'-k.$  

For $\epsilon=0,$ let $\mathcal{V}=\{\widetilde{V}_1,\dots,\widetilde{V}_{\overline{s}}\},$ so that $|\mathcal{V}|=\overline{s}\ge q-r'-k.$  By assumption, the arcs $\{A_i\}_{i=1}^k$ cannot be allowable for $\mathcal{V}.$  Any vertex $v\in \mathcal{V}$ cannot be the endpoint of any $A_l$ since no endpoint is in $\mathcal{S}\cap \mathcal{S}'.$   So, there is at least one degree-3 vertex $\widetilde{V}_t$ such that $\eta_0(\widetilde{V}_t)=\eta_1(\widetilde{V}_t)=\eta_2(\widetilde{V}_t)=0.$  So, we have $f(C_\delta)-g(C_\delta)=2(M_\delta-M'_\delta)=2\eta_\delta(v)=0,$ for each $\delta\in\{0,1,2\},$ and therefore again each cloud has the same value in the diagram for $[c_i]$ as it does in the diagram for $[d_j].$  So, $[c_i]=[d_j].$

For $\epsilon=1,$ let $\mathcal{W}$ be the set of degree-3 vertices in $(\mathcal{S}\cap \mathcal{S}')\setminus\mathcal{W}',$ so that $\mathcal{W}\cap\mathcal{W}'=\emptyset$ and $|\mathcal{W}|=\overline{s}-s'\ge q-r'-k-s',$ and therefore, again by assumption, $\{A_i\}_{i=1}^k$ is not allowable for $\mathcal{W}.$  No vertex in $\mathcal{W}$ can be an endpoint of any arc $A_l$, so there must be some vertex $\widetilde{V}_t\in\mathcal{W}$ with $\eta_\delta(\widetilde{V}_t)=0$ for $\delta=0,1,2.$  Now, we have $f(C_\delta)-g(C_\delta)=2(M_\delta-M'_\delta)+\mu_\delta-\mu'_\delta,$ but since $\widetilde{V}_t$ is not on $A_0,$ we must have both $x$ and $x'$ in the same cloud in the system of clouds determined by the edge in direction 2 from $\widetilde{V}_t,$ so $\mu_\delta=\mu'_\delta,$ and therefore $f(C_\delta)-g(C_\delta)=2(M_\delta-M'_\delta)=2\eta_\delta(\widetilde{V}_t)=0$ for each direction $\delta,$ so again we have $[c_i]=[d_j].$
\end{proof}

To give a better idea of the values of $n$ for which Theorem \ref{thm:Main} determines $TC(UC^n(\Gamma))$ (and $TC(C^n(\Gamma))$), we consider all values of $n\ge2$ with $\Gamma$ as in the left of Figure \ref{fig:PartOneA}, where we have $m=11,\ r=2,$ and $s=9.$  If $n$ is either sufficiently large or sufficiently small, it is easy to determine if the theorem applies.  For $n\ge 25,$ statement \ref{nLarge} of the theorem applies, and for $2\le n\le 13,$ statement \ref{nSmall} applies.  For $22\le n\le24,$ the theorem does not apply.  For the ``middle" values of $n$, namely $14\le n\le21,$ only statement \ref{nMedium} might apply.

For $n=14$ statement \ref{nMedium} does apply, since we may chose the arcs $A_1,\ A_2,$ and $A_3$ as in Figure \ref{fig:PartTwoBi}, and let $\mathcal{V}$ consist of the vertices $\widetilde{V}_1,\ \widetilde{V}_2,$ and $\widetilde{V}_3$ (as labeled in Figure \ref{fig:PartTwoBi}).  Then, we have $k=3,\ r'=2, |\mathcal{V}|=3\ge q-r'-k=7-2-3=2,$ and the arcs $A_1,\ A_2,$ and $A_3$ are again allowable for $\mathcal{V}.$  Note the vertex $\widetilde{V}_3$ would not be used in the construction of the cells $\Phi$ and $\Psi$ for $n=14.$  For $n=15,$ we can again let $A_0$ be the unique edge which has $\ast$ as one of its endpoints, and keep $A_1,\ A_2,$ and $A_3$ the same, so the theorem also applies for $n=15.$  The cases $n=16,\ n=17,$ and $n=19$ are covered in the proof.  

For $n=18,$ suppose the appropriate collection of arcs $A_1,\dots,A_k$ exists and is allowable for a set $\mathcal{V}$ of vertices of degree 3, so that we have $|\mathcal{V}|\ge q-r'-k.$  On the other hand, there must be a total of $2k$ distinct vertices which are the endpoints of the arcs.  No vertex in $\mathcal{V}$ can be any of these endpoints, so we must have $|\mathcal{V}|\le s-2k+r-r',$ since there are $r-r'$ vertices of degree greater than 3 which are the endpoint of some arc, and the remaining $2k-(r-r')$ endpoints must be of degree 3.  So, we have 
\[
9-r'-k=q-r'-k\le|\mathcal{V}|\le s-2k+r-r'=9-2k+2-r'.
\]
Comparing the left and right sides, we see that we must have $k\le2.$  Since we also must have $r'\le r=2,$ this gives $|\mathcal{V}|\ge 9-2-2=5.$  But, the only vertices of degree 3 which can fall on the interior of any arc with essential endpoints are the vertices labeled $v_4,\ v_5,$ and $v_8$ in Figure \ref{fig:PartOneA}. so $|\mathcal{V}|\le3,$ arriving at a contradiction, so the theorem does not apply for $n=18,$ and for similar reasons, the theorem does not apply for $n=20.$

For $n=21,$ suppose the appropriate collection of arcs $A_0,A_1,\dots,A_k$ and collections of vertices $\mathcal{W}$ and $\mathcal{W}'$ exist, so that $|\mathcal{W}\cup\mathcal{W}'|\ge q-r'-k=10-r'-k.$  Similar to above, since no vertex in $\mathcal{W}\cup\mathcal{W'}$ can be an endpoint of any arc $A_1,\dots,A_k,$ we must have 
\[
10-r'-k=q-r'-k\le|\mathcal{W}\cup\mathcal{W}'|\le s-2k+r-r'=9-2k+2-r',
\]
so that $k\le1.$  Since again we must have $r'\le2,$ this gives $|\mathcal{W}\cup\mathcal{W}'|\ge10-r'-k\ge10-2-1=7.$  However, again the only vertices of degree 3 which can fall on the interior of any arc $A_1$ are the vertices labeled $v_4,\ v_5,$ and $v_8$ in Figure \ref{fig:PartOneA}, and here it is clear that at most two additional vertices could be included in the interior of the arc $A_0,$ so that $|\mathcal{W}\cup\mathcal{W}'|\le5,$ a contradiction, so the theorem does not apply for $n=21.$

So, the only values of $n\ge2$ for which Theorem \ref{thm:Main} does not determine $TC(UC^n(\Gamma))$ and $TC(C^n(\Gamma))$ are $n=18$ and $20\le n\le24.$  

\section*{Acknowledgements} This project was supported by a research fellowship awarded by Lehigh University which was made possible thanks to a generous donation by Dale Strohl, class of 1958.  The author would also like to thank his advisor, Donald Davis, for his suggestion of the topic and his help in preparing this article, and the referee of this article for carefully reading the article and pointing out a mistake in an earlier version.  This correction led to the notion of allowable collections of arcs.  The author also extends his gratitude to Daniel Farley for his discussions regarding the proof of Lemma \ref{lemma:products}.

\bibliography{References}
\bibliographystyle{gtart}
\end{document}